\theoremstyle{definition}
\newtheorem{theorem}{Theorem}
\newtheorem{lemma}[theorem]{Lemma}
\newtheorem{proposition}[theorem]{Proposition}
\newtheorem{corollary}[theorem]{Corollary}
\numberwithin{equation}{section}
\numberwithin{theorem}{section}
\begin{document}

\begin{center}
{\bf{\Large Differential equations \\ satisfied by modular forms of level 5 }}
\end{center}

\begin{center}
By Kazuhide Matsuda
\end{center}

\begin{center}
Faculty of Fundamental Science, National Institute of Technology, Niihama College,\\
7-1 Yagumo-chou, Niihama, Ehime 792-8580, Japan \\
E-mail: matsuda@sci.niihama-nct.ac.jp  \\
Fax: 81-0897-37-7809 
\end{center}

\noindent
{\bf Abstract}
This paper describes the derivation of the level 5 versions of Ramanujan's system of ordinary differential equations satisfied by 
the Eisenstein series, $E_2(q),E_4(q)$, and $E_6(q).$
\newline
{\bf Key Words:} theta function; theta derivatives; rational characteristics.
\newline
{\bf MSC(2010)}  14K25;  11E25

\section{Introduction}
\label{intro}
Let $\mathbb{N}_0$ and $\mathbb{N},$ 
denote sets of nonnegative and positive integers, respectively. 
For $k$ and $n\in\mathbb{N},$ 
$\sigma_k(n)$ is the sum of the $k$-th power of the positive divisors of $n,$ 
and 
$\sigma_k(n)=0$ for $n\in\mathbb{Q}\setminus\mathbb{N}_0.$ 
\par
The {\it upper half plane} $\mathbb{H}^2$ is defined by 
$
\mathbb{H}^2=
\{
\tau\in\mathbb{C} \,\, | \,\, \Im \tau>0
\}. 
$
Throughout this paper, we set $q=\exp(2\pi i  \tau)$ and define the {\it Dedekind eta function} as  
$
\displaystyle
\eta(\tau)=q^{\frac{1}{24}} \prod_{n=1}^{\infty} (1-q^n). 
$
\par
In \cite{Darboux}, Darboux studied a problem in mechanics and differential geometry, and encountered the system of ordinary differential equations (ODEs), 
\begin{equation}
\label{eqn:Halphen}
x\frac{d}{dx}(u_1+u_2)=u_1 u_2,  \,\,
x\frac{d}{dx}(u_1+u_3)=u_1 u_3,  \,\,
x\frac{d}{dx}(u_2+u_3)=u_2 u_3,  \,\,x=\exp(\pi i \tau).
\end{equation}
In \cite[pp. 330]{Halphen}, Halphen provided the following solution to the problem studied by Darboux, 
\begin{align*}
u_1=&1+8\sum_{n=1}^{\infty}\frac{x^{2n}}{(1+x^{2n})^2}, \,\,
u_2=-8\sum_{n=1}^{\infty}\frac{x^{2n-1}}{(1-x^{2n-1})^2}, \,\,
u_3=8\sum_{n=1}^{\infty}\frac{x^{2n-1}}{(1+x^{2n-1})^2}. 
\end{align*}
Equation (\ref{eqn:Halphen}) is called Halphen's system. 
\par
For studying Painlev\'e type equations of the third order, Chazy \cite{Chazy} considered the nonlinear differential equation for the complex function 
\begin{equation}
\label{eqn:Chazy}
y^{\prime \prime \prime}=
2y y^{\prime \prime}-3(y^{\prime})^2. 
\end{equation}
Note that $y=u_1+u_2+u_3=\pi i E_2(\tau)$ is a solution of Chazy's equation (\ref{eqn:Chazy}), 
where 
the Eisenstein series $E_2, E_4$, and $E_6$ are respectively defined by 
\begin{align*}
E_2(q)=E_2(\tau)&:=1-24\sum_{n=1}^{\infty} \sigma_1(n) q^n, \,\,
E_4(q)=E_4(\tau):=1+240\sum_{n=1}^{\infty} \sigma_3(n) q^n, \\
E_6(q)=E_4(\tau)&:=1-504\sum_{n=1}^{\infty} \sigma_5(n) q^n.
\end{align*}
\par
In \cite{Ramanujan}, 
Ramanujan derived the following system of ODEs, 
\begin{equation}
\label{eqn:Ramanujan-ODE}
q\frac{d E_2}{dq}=\frac{(E_2)^2-E_4  }{12  }, \,\,
q\frac{d E_4}{dq}=\frac{E_2 E_4-E_6  }{3  }, \,\,
q\frac{d E_6}{dq}=\frac{E_2 E_6-(E_4)^2  }{2  }. 
\end{equation}
In particular, 
by eliminating $E_4$ and $E_6$, $y(\tau)=\pi i E_2(\tau)$ is observed to be a solution to Chazy's equation (\ref{eqn:Chazy}). 
Further, Ohyama \cite{Ohyama0} showed that 
Halphen's differential field is an extension of Ramanujan's differential field, 
whose Galois group is the symmetric group $S_3.$
\par
In \cite{Huber} and \cite{Matsuda}, ODEs satisfied by the cubic theta functions were derived as
\begin{align*}
a(q)=&\sum_{m,n\in\mathbb{Z}} q^{m^2+mn+n^2}, \,\,
b(q)=\sum_{m,n\in\mathbb{Z}} \omega^{n-m} q^{m^2+mn+n^2}, \\
c(q)=&\sum_{m,n\in\mathbb{Z}} q^{(n+\frac13)^2+(n+\frac13)(m+\frac13)+(m+\frac13)^2}, \,\,\omega=e^{\frac{2\pi i}{3}}, \,\,|q|<1. 
\end{align*}
Therefore, 
Huber used Ramanujan's theory of theta functions. 
In addition, 
Cooper \cite{Cooper} also treated ODEs satisfied by modular forms by using Ramanujan's theory. 
In the present study, the theory of theta functions with rational characteristics has been adopted.  
Noted is that Hermite \cite{Hermite} introduced theta functions with characteristics, and 
Farkas and Kra \cite{Farkas-Kra} developed the theory of theta functions with rational characteristics. 
\par
The aim of the current research was to derive the ODEs satisfied by the modular forms of  level 5, 
and to obtain an extension of Ramanujan's differential field by using the modular forms of  level 5.

\par
The main theorems of this study are as follows.

\begin{theorem}
\label{thm-level5-E_2(q)}
{\it
For $q\in\mathbb{C}$ with $|q|<1,$ 
set 
\begin{align*}
P(q)=&\cot \frac{2 \pi}{5} + 4 \sin \frac{4 \pi}{5} \sum_{n=1}^{\infty} (d_{1,5}(n)-d_{4,5}(n)) q^n
-4\sin \frac{2\pi}{5} \sum_{n=1}^{\infty} (d_{2,5}(n)-d_{3,5}(n)) q^n, \\
Q(q)=&\cot \frac{ \pi}{5} + 4 \sin \frac{2 \pi}{5} \sum_{n=1}^{\infty} (d_{1,5}(n)-d_{4,5}(n)) q^n
+4\sin \frac{4\pi}{5} \sum_{n=1}^{\infty} (d_{2,5}(n)-d_{3,5}(n)) q^n, \\
R(q)=&E_2(q)=1-24\sum_{n=1}^{\infty} \sigma_1(n) q^n. 
\end{align*}
Then, we have 
\begin{align*}
\label{eqn-level5-E_2(q)}
q\frac{d}{dq}P=&\frac{-13P^3-39P^2Q+47PQ^2-9Q^3+2PR}{24},  \,\,
q\frac{d}{dq}Q=\frac{9P^3+47P^2Q+39PQ^2-13Q^3+2QR}{24}, \\
q\frac{d}{dq}R=&\frac{5P^4-15P^3Q-155P^2Q^2+15PQ^3+5Q^4+R^2}{12}. 
\end{align*}
}
\end{theorem}


\begin{theorem}
\label{thm-level5-E_2(q^5)}
{\it
Further, for $q\in\mathbb{C}$ with $|q|<1,$ set
\begin{align*}
P(q)=&1+ 10 \sum_{n=1}^{\infty} (d_{2,5}(n)-d_{3,5}(n)) q^n, \,\,
Q(q)=3 + 10\sum_{n=1}^{\infty} (d_{1,5}(n)-d_{4,5}(n)) q^n, \\
R(q)=&E_2(q^5)=1-24\sum_{n=1}^{\infty} \sigma_1(n) q^{5n}. 
\end{align*}
Then, we have 
\begin{align*}
q\frac{d}{dq}P=&\frac{13P^3+39P^2Q-47PQ^2+9Q^3+50PR}{120},  \,\,
q\frac{d}{dq}Q=\frac{-9P^3-47P^2Q-39PQ^2+13Q^3+50QR}{120}, \\
q\frac{d}{dq}R=&\frac{P^4-3P^3Q-31P^2Q^2+3PQ^3+Q^4+125R^2}{300}. 
\end{align*}
}
\end{theorem}

\par
The remainder of this paper is organized as follows.
Section \ref{sec:properties} provides an overview of Farkas' and Kra's theory of theta functions with rational characteristics. 
Next, Section \ref{sec:Weierstrass} discusses Weierstrass elliptic function theory, and 
Section \ref{sec:proof-preliminary} describes the preliminary results obtained for $P, Q,$ and $R.$ 
Further, Section \ref{sec:application-residue-theorem} derives the identities of the theta derivatives. 
Sections \ref{sec:proof:level5-E_2(q)} and \ref{sec:proof:level5-E_2(q^5)} provide the proofs for Theorems \ref{thm-level5-E_2(q)} and \ref{thm-level5-E_2(q^5)}, respectively. 
Section \ref{sec:product-series-level5} presents the eta products, $\eta^5(\tau)/\eta(5\tau)$ and $\eta^5(5\tau)/\eta(\tau).$
Section \ref{sec:theorem-Farkas-Kra} discusses the theorem of Farkas and Kra stating the theta derivatives. 
Finally, Section \ref{sec:Riccati-level-5} derives the Riccati equations satisfied by the modular forms of level 5.

\subsection*{Acknowledgments}
This work was supported by JSPS KAKENHI Grant Number JP17K14213.

\section{Properties of the theta functions}
\label{sec:properties}

\subsection{Definitions}
Following the work of Farkas and Kra \cite{Farkas-Kra}, 
we introduce the {\it theta function with characteristics,} 
which is defined by 
\begin{align*}
\theta 
\left[
\begin{array}{c}
\epsilon \\
\epsilon^{\prime}
\end{array}
\right] (\zeta, \tau) 
=
\theta 
\left[
\begin{array}{c}
\epsilon \\
\epsilon^{\prime}
\end{array}
\right] (\zeta) 
:=&\sum_{n\in\mathbb{Z}} \exp
\left(2\pi i\left[ \frac12\left(n+\frac{\epsilon}{2}\right)^2 \tau+\left(n+\frac{\epsilon}{2}\right)\left(\zeta+\frac{\epsilon^{\prime}}{2}\right) \right] \right), 
\end{align*}
where $\epsilon, \epsilon^{\prime}\in\mathbb{R}, \, \zeta\in\mathbb{C},$ and $\tau\in\mathbb{H}^{2}.$ 
\par
The relation between  theta functions with rational characteristics and Jacobi theta functions is given by 
\begin{equation*}
\vartheta_1(\pi \zeta)
=
-
\theta 
\left[
\begin{array}{c}
1 \\
1
\end{array}
\right] (\zeta), \,\, 
\vartheta_2(\pi \zeta)
=
\theta 
\left[
\begin{array}{c}
1 \\
0
\end{array}
\right] (\zeta), \,\, 
\vartheta_3(\pi \zeta)
=
\theta 
\left[
\begin{array}{c}
0 \\
0
\end{array}
\right] (\zeta), \,\, 
\vartheta_4(\pi \zeta)
=
\theta 
\left[
\begin{array}{c}
0 \\
1
\end{array}
\right] (\zeta). 
\end{equation*}
\par
The {\it theta constants} are given by 
\begin{equation*}
\theta 
\left[
\begin{array}{c}
\epsilon \\
\epsilon^{\prime}
\end{array}
\right]
:=
\theta 
\left[
\begin{array}{c}
\epsilon \\
\epsilon^{\prime}
\end{array}
\right] (0, \tau).
\end{equation*}
\par
Furthermore, 
we denote the {\it theta derivatives} by 
\begin{equation*}
\theta^{\prime} 
\left[
\begin{array}{c}
\epsilon \\
\epsilon^{\prime}
\end{array}
\right]
:=\left.
\frac{\partial}{\partial \zeta} 
\theta 
\left[
\begin{array}{c}
\epsilon \\
\epsilon^{\prime}
\end{array}
\right] (\zeta, \tau)
\right|_{\zeta=0}, 
\,
\theta^{\prime \prime} 
\left[
\begin{array}{c}
\epsilon \\
\epsilon^{\prime}
\end{array}
\right]
:=\left.
\frac{\partial^2 }{\partial \zeta^2} 
\theta 
\left[
\begin{array}{c}
\epsilon \\
\epsilon^{\prime}
\end{array}
\right] (\zeta, \tau)
\right|_{\zeta=0},
\end{equation*}
and
\begin{equation*}
\theta^{\prime \prime \prime} 
\left[
\begin{array}{c}
\epsilon \\
\epsilon^{\prime}
\end{array}
\right]
:=\left.
\frac{\partial^3 }{\partial \zeta^3} 
\theta 
\left[
\begin{array}{c}
\epsilon \\
\epsilon^{\prime}
\end{array}
\right] (\zeta, \tau)
\right|_{\zeta=0},\,\,
\theta^{(n)} 
\left[
\begin{array}{c}
\epsilon \\
\epsilon^{\prime}
\end{array}
\right]
:=\left.
\frac{\partial^n}{\partial \zeta^n} 
\theta 
\left[
\begin{array}{c}
\epsilon \\
\epsilon^{\prime}
\end{array}
\right] (\zeta, \tau)
\right|_{\zeta=0}, \,\,\,(n=1,2,3,4,\ldots). 
\end{equation*}
In particular, Jacobi's derivative formula is given by 
\begin{equation}
\label{eqn:Jacobi-derivative}
\theta^{\prime} 
\left[
\begin{array}{c}
1 \\
1
\end{array}
\right] 
=
-\pi 
\theta
\left[
\begin{array}{c}
0 \\
0
\end{array}
\right] 
\theta
\left[
\begin{array}{c}
1 \\
0
\end{array}
\right] 
\theta
\left[
\begin{array}{c}
0 \\
1
\end{array}
\right].  
\end{equation}

\subsection{Basic properties}
First, we note that 
for $m,n\in\mathbb{Z},$ 
\begin{equation}
\label{eqn:integer-char}
\theta 
\left[
\begin{array}{c}
\epsilon \\
\epsilon^{\prime}
\end{array}
\right] (\zeta+n+m\tau, \tau) =
\exp(2\pi i)\left[\frac{n\epsilon-m\epsilon^{\prime}}{2}-m\zeta-\frac{m^2\tau}{2}\right]
\theta 
\left[
\begin{array}{c}
\epsilon \\
\epsilon^{\prime}
\end{array}
\right] (\zeta,\tau),
\end{equation}
and 
\begin{equation}
\theta 
\left[
\begin{array}{c}
\epsilon +2m\\
\epsilon^{\prime}+2n
\end{array}
\right] 
(\zeta,\tau)
=\exp(\pi i \epsilon n)
\theta 
\left[
\begin{array}{c}
\epsilon \\
\epsilon^{\prime}
\end{array}
\right] 
(\zeta,\tau).
\end{equation}
Furthermore, 
it is easy to see that 
\begin{equation*}
\theta 
\left[
\begin{array}{c}
-\epsilon \\
-\epsilon^{\prime}
\end{array}
\right] (\zeta,\tau)
=
\theta 
\left[
\begin{array}{c}
\epsilon \\
\epsilon^{\prime}
\end{array}
\right] (-\zeta,\tau)
\,\,
\mathrm{and}
\,\,
\theta^{\prime} 
\left[
\begin{array}{c}
-\epsilon \\
-\epsilon^{\prime}
\end{array}
\right] (\zeta,\tau)
=
-
\theta^{\prime} 
\left[
\begin{array}{c}
\epsilon \\
\epsilon^{\prime}
\end{array}
\right] (-\zeta,\tau).
\end{equation*}
\par
For $m,n\in\mathbb{R},$ 
we get 
\begin{align}
\label{eqn:real-char}
&\theta 
\left[
\begin{array}{c}
\epsilon \\
\epsilon^{\prime}
\end{array}
\right] \left(\zeta+\frac{n+m\tau}{2}, \tau\right)   \notag\\
&=
\exp(2\pi i)\left[
-\frac{m\zeta}{2}-\frac{m^2\tau}{8}-\frac{m(\epsilon^{\prime}+n)}{4}
\right]
\theta 
\left[
\begin{array}{c}
\epsilon+m \\
\epsilon^{\prime}+n
\end{array}
\right] 
(\zeta,\tau). 
\end{align}
Note that 
$\theta 
\left[
\begin{array}{c}
\epsilon \\
\epsilon^{\prime}
\end{array}
\right] \left(\zeta, \tau\right)$ has only one zero in the fundamental parallelogram, 
and is given by 
$$
\zeta=\frac{1-\epsilon}{2}\tau+\frac{1-\epsilon^{\prime}}{2}. 
$$

\subsection{Jacobi's triple product identity}
All the theta functions have infinite product expansions given by 
\begin{align}
\theta 
\left[
\begin{array}{c}
\epsilon \\
\epsilon^{\prime}
\end{array}
\right] (\zeta, \tau) &=\exp\left(\frac{\pi i \epsilon \epsilon^{\prime}}{2}\right) x^{\frac{\epsilon^2}{4}} z^{\frac{\epsilon}{2}}    \notag  \\
                           &\quad 
                           \displaystyle \times\prod_{n=1}^{\infty}(1-x^{2n})(1+e^{\pi i \epsilon^{\prime}} x^{2n-1+\epsilon} z)(1+e^{-\pi i \epsilon^{\prime}} x^{2n-1-\epsilon}/z),  \label{eqn:Jacobi-triple}
\end{align}
where $x=\exp(\pi i \tau)$ and $z=\exp(2\pi i \zeta).$ 
Therefore, according to Jacobi's derivative formula (\ref{eqn:Jacobi-derivative}), 
it follows that 
\begin{equation*}
\label{eqn:Jacobi}
\theta^{\prime} 
\left[
\begin{array}{c}
1 \\
1
\end{array}
\right](0,\tau) 
=
-2\pi 
q^{\frac18}
\prod_{n=1}^{\infty}(1-q^n)^3, \,\,q=\exp(2\pi i \tau). 
\end{equation*}

\subsection{Spaces of $N$-th order $\theta$-functions}

Following the theory by Farkas and Kra \cite{Farkas-Kra}, 
we define 
$\mathcal{F}_{N}\left[
\begin{array}{c}
\epsilon \\
\epsilon^{\prime}
\end{array}
\right] $ as the set of all functions $f$ satisfying the two functional equations, 
$$
f(\zeta+1)=\exp(\pi i \epsilon) \,\,f(\zeta),
$$
and 
$$
f(\zeta+\tau)=\exp(-\pi i)[\epsilon^{\prime}+2N\zeta+N\tau] \,\,f(\zeta), \quad \zeta\in\mathbb{C},  \,\,\tau \in\mathbb{H}^2,
$$ 
where 
$N$ is a positive integer and 
$\left[
\begin{array}{c}
\epsilon \\
\epsilon^{\prime}
\end{array}
\right] \in\mathbb{R}^2.$ 
This set of functions is called the space of {\it $N$-th order $\theta$-functions with characteristics }
$\left[
\begin{array}{c}
\epsilon \\
\epsilon^{\prime}
\end{array}
\right]. $ 
Note that 
$$
\dim \mathcal{F}_{N}\left[
\begin{array}{c}
\epsilon \\
\epsilon^{\prime}
\end{array}
\right] =N.
$$
For the proof, please refer to \cite[pp.133]{Farkas-Kra}.

\subsection{The heat equation}
The theta function satisfies the following heat equation: 
\begin{equation}
\label{eqn:heat}
\frac{\partial^2}{\partial \zeta^2}
\theta
\left[
\begin{array}{c}
\epsilon \\
\epsilon^{\prime}
\end{array}
\right](\zeta,\tau)
=
4\pi i
\frac{\partial}{\partial \tau}
\theta
\left[
\begin{array}{c}
\epsilon \\
\epsilon^{\prime}
\end{array}
\right](\zeta,\tau). 
\end{equation}

\section{Weierstrass elliptic function theory  }
\label{sec:Weierstrass}

In this paper, we introduce Weierstrass $\wp$-function and $\sigma$-function as follows:
\begin{align*}
\wp(z;\omega_1, \omega_2)=\wp(z)
=&
\frac{1}{z^2}+\sum_
{
\tiny{
\begin{matrix}
(m,n)\in\mathbb{Z}^2 \\ 
(m,n)\neq(0,0)
\end{matrix}
}
}
\left(
\frac{1}{(z-m\omega_1-n\omega_2)^2}-\frac{1}{(m\omega_1+n\omega_2)^2}
\right),   \\
\sigma(z;\omega_1, \omega_2)=\sigma(z)
=&
z
\prod_
{
\tiny{
\begin{matrix}
(m,n)\in\mathbb{Z}^2 \\ 
(m,n)\neq(0,0)
\end{matrix}
}
}\left( 1-\frac{z}{m\omega_1+n\omega_2}  \right)
\exp
\left(
\frac{z}{m\omega_1+n\omega_2}
+
\frac{z^2}{2(m\omega_1+n\omega_2)^2}
\right),
\end{align*}
where $z,\omega_1,\omega_2$ are complex numbers with $\omega_2/ \omega_1 \not\in\mathbb{R}.$
\par
The following formula is recalled from the study by Whittaker and Watson \cite[pp. 437, 459]{WW}:
\begin{align*}
\wp^{\prime}(z)^2=&4\wp(z)^3-g_2\wp(z)-g_3,  \\
g_2(\omega_1, \omega_2)=&
60G_4(\omega_1, \omega_2)
=
60
\sum_
{
\tiny{
\begin{matrix}
(m,n)\in\mathbb{Z}^2 \\ 
(m,n)\neq(0,0)
\end{matrix}
}
}
\frac{1}{(m\omega_1+n\omega_2)^4},  \\
g_3(\omega_1, \omega_2)=&
140G_6(\omega_1, \omega_2)
=
140
\sum_
{
\tiny{
\begin{matrix}
(m,n)\in\mathbb{Z}^2 \\ 
(m,n)\neq(0,0)
\end{matrix}
}
}
\frac{1}{(m\omega_1+n\omega_2)^6}, 
\end{align*}
which implies 
\begin{equation}
\wp^{\prime \prime}(z)=6\wp^2(z)-\frac12g_2. 
\end{equation}
For $k\in\mathbb{N},$ the Eisenstein series is defined by 
\begin{equation*}
G_{2k}(\tau)=G_{2k}(1, \tau)
=\sum_
{
\tiny{
\begin{matrix}
(m,n)\in\mathbb{Z}^2 \\ 
(m,n)\neq(0,0)
\end{matrix}
}
}
\frac{1}{(m+n\tau)^{2k}}, \,\,\tau\in\mathbb{H}^2. 
\end{equation*}
Moreover, from \cite[pp. 124]{Farkas-Kra}, we recall 
\begin{align*}
\wp(z;1, \tau)=&
\frac{1}{3}
\frac
{
\theta^{\prime  \prime \prime} 
\left[
\begin{array}{c}
1 \\
1
\end{array}
\right]
}
{
\theta^{\prime } 
\left[
\begin{array}{c}
1 \\
1
\end{array}
\right]
}-
\frac{d^2}{dz^2} \log 
\theta
\left[
\begin{array}{c}
1 \\
1
\end{array}
\right](z,\tau),  \\
\sigma(z;\omega_1, \omega_2)=&
\exp
\left(
\frac{\eta_1 z^2}{2\omega_1}
\right)
\frac{\omega_1}
{ 
\theta^{\prime}
\left[
\begin{array}{c}
1 \\
1
\end{array}
\right] }
\theta
\left[
\begin{array}{c}
1 \\
1
\end{array}
\right]
\left(
\frac{z}{\omega_1},
\tau
\right),
\end{align*}
where $\omega_2/\omega_1=\tau\in\mathbb{H}^2.$

\section{Preliminary results}
\label{sec:proof-preliminary}

\begin{proposition}
\label{prop:1st-derivative-(1,1/5)-(1,3/5)}
{\it
For every $\tau \in\mathbb{H}^2,$ we have 
\begin{align*}
\frac{
\theta^{\prime}
\left[
\begin{array}{c}
1 \\
\frac15
\end{array}
\right]  
}
{ 
\theta
\left[
\begin{array}{c}
1 \\
\frac15
\end{array}
\right]  
}=&
-\pi
\left\{
\cot \frac{2 \pi}{5} + 4 \sin \frac{4 \pi}{5} \sum_{n=1}^{\infty} (d_{1,5}(n)-d_{4,5}(n)) q^n
-4\sin \frac{2\pi}{5} \sum_{n=1}^{\infty} (d_{2,5}(n)-d_{3,5}(n)) q^n
\right\},  \\
\frac{
\theta^{\prime}
\left[
\begin{array}{c}
1 \\
\frac35
\end{array}
\right]  
}
{ 
\theta
\left[
\begin{array}{c}
1 \\
\frac35
\end{array}
\right]  
}=&
-\pi
\left\{
\cot \frac{ \pi}{5} + 4 \sin \frac{2 \pi}{5} \sum_{n=1}^{\infty} (d_{1,5}(n)-d_{4,5}(n)) q^n
+4\sin \frac{4\pi}{5} \sum_{n=1}^{\infty} (d_{2,5}(n)-d_{3,5}(n)) q^n
\right\},
\end{align*}
where $q=\exp(2\pi i \tau).$ 
}
\end{proposition}

\begin{proof}
The proposition follows from Jacobi's triple product identity (\ref{eqn:Jacobi-triple}). 
\end{proof}

\begin{proposition}
\label{prop:1st-derivative-(1/5,1)-(3/5,1)}
{\it
For every $\tau \in\mathbb{H}^2,$ we have 
\begin{align*}
\frac{
\theta^{\prime}
\left[
\begin{array}{c}
\frac15 \\
1
\end{array}
\right]  
}
{ 
\theta
\left[
\begin{array}{c}
\frac15 \\
1
\end{array}
\right]  
}=&
\frac{\pi i}{5}
\left\{
1 + 10 \sum_{n=1}^{\infty} (d_{2,5}(n)-d_{3,5}(n)) y^n
\right\},  \,\,
\frac{
\theta^{\prime}
\left[
\begin{array}{c}
\frac35 \\
1
\end{array}
\right]  
}
{ 
\theta
\left[
\begin{array}{c}
\frac35 \\
1
\end{array}
\right]  
}=&
\frac{\pi i}{5}
\left\{
3 + 10 \sum_{n=1}^{\infty} (d_{1,5}(n)-d_{4,5}(n)) y^n
\right\},
\end{align*}
where $y=\exp(2\pi i \tau/5).$ 
}
\end{proposition}

\begin{proof}
The proposition follows from Jacobi's triple product identity (\ref{eqn:Jacobi-triple}). 
\end{proof}

\begin{proposition}
\label{prop:E2}
{\it
For every $\tau\in \mathbb{H}^2,$ we have 
\begin{equation*}
\frac
{
\theta^{\prime\prime \prime}
\left[
\begin{array}{c}
1 \\
1
\end{array}
\right]
}
{
\theta^{\prime}
\left[
\begin{array}{c}
1 \\
1
\end{array}
\right]
}
=4 \pi i \frac{d}{d\tau} \log \theta^{\prime}
\left[
\begin{array}{c}
1 \\
1
\end{array}
\right]
=
-\pi^2E_2(q), \,\,q=\exp(2\pi i \tau). 
\end{equation*}
}
\end{proposition}

\begin{proof}
The proposition follows from Jacobi's triple product identity (\ref{eqn:Jacobi-triple}). 
\end{proof}

\section{Applications of the residue theorem}
\label{sec:application-residue-theorem}

\begin{theorem}
\label{thm-fundamental-elliptic-function}
{\it
The sum of all the residues of an elliptic function in the fundamental parallelogram is zero. 
}
\end{theorem}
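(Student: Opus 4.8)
The plan is to reduce the claim to Cauchy's residue theorem applied to the boundary of the period parallelogram, and then to exploit double periodicity to show that the resulting boundary integral vanishes. Let $f$ be an elliptic function with periods $\omega_1,\omega_2\in\C$ that are linearly independent over $\R$, and fix a base point $a\in\C$ so that the fundamental parallelogram $P$ has vertices $a,\,a+\omega_1,\,a+\omega_1+\omega_2,\,a+\omega_2$. Since $f$ is meromorphic its poles are isolated, so only finitely many lie in the closure of $P$; by translating $a$ slightly I may assume that no pole lies on the boundary $\partial P$. I would then apply Cauchy's residue theorem to the positively oriented contour $\partial P$:
\[
\sum_{p}\res_{z=p}f(z)=\frac{1}{2\pi i}\oint_{\partial P}f(z)\,dz,
\]
where the sum runs over the poles of $f$ inside $P$.

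Next I would show that the contour integral is zero by pairing the two pairs of parallel sides. On the side from $a+\omega_1+\omega_2$ to $a+\omega_2$ I substitute $z=w+\omega_2$, so that $w$ traverses the side from $a+\omega_1$ to $a$, that is, the reverse of the side from $a$ to $a+\omega_1$; double periodicity gives $f(w+\omega_2)=f(w)$, and the two contributions therefore cancel. The identical substitution $z=w+\omega_1$ cancels the remaining pair of sides. Hence $\oint_{\partial P}f(z)\,dz=0$, and the sum of all residues in $P$ vanishes.

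The argument involves no genuine obstacle; the only point meriting care is the standing assumption that no pole lies on $\partial P$, which is harmless because the pole set is discrete and the base point $a$ may always be perturbed to avoid it. I would state this normalization explicitly at the outset so that the subsequent cancellation of boundary integrals is unambiguous.
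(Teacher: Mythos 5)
Your proof is correct and complete: it is the canonical argument — Cauchy's residue theorem applied to the positively oriented boundary of the period parallelogram, followed by cancellation of the two pairs of opposite sides via the substitutions $z=w+\omega_1$ and $z=w+\omega_2$ and double periodicity, with the standard perturbation of the base point $a$ to keep poles off $\partial P$. The paper states this classical result (Theorem \ref{thm-fundamental-elliptic-function}) without proof, as a known fact it relies on, and your argument is exactly the textbook proof it implicitly invokes, so there is no divergence to report.
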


\subsection{Derivative formulas of level five}

\begin{theorem}
\label{thm:derivative-level-5-(1,1/5)-(1,3/5)}
{\it
For every $\tau\in\mathbb{H}^2,$ we have 
\begin{align}
&
\frac{
\theta^{\prime}
\left[
\begin{array}{c}
1 \\
\frac15
\end{array}
\right]
}
{
\theta
\left[
\begin{array}{c}
1 \\
\frac15
\end{array}
\right]
}
=
\theta^{\prime}
\left[
\begin{array}{c}
1 \\
1
\end{array}
\right]
\frac
{
\left(
\theta^5
\left[
\begin{array}{c}
1 \\
\frac15
\end{array}
\right]
-
3
\theta^5
\left[
\begin{array}{c}
1 \\
\frac35
\end{array}
\right]
\right)
}
{
10
\theta^3
\left[
\begin{array}{c}
1 \\
\frac15
\end{array}
\right]
\theta^3
\left[
\begin{array}{c}
1 \\
\frac35
\end{array}
\right]
}, \label{eqn:analogue-Jacobi-(1,1/5)}  \\
&
\frac
{
\theta^{\prime}
\left[
\begin{array}{c}
1 \\
\frac35
\end{array}
\right]
}
{
\theta
\left[
\begin{array}{c}
1 \\
\frac35
\end{array}
\right]
}
=
\theta^{\prime}
\left[
\begin{array}{c}
1 \\
1
\end{array}
\right]
\frac
{
\left(
3
\theta^5
\left[
\begin{array}{c}
1 \\
\frac15
\end{array}
\right]
+
\theta^5
\left[
\begin{array}{c}
1 \\
\frac35
\end{array}
\right]
\right)
}
{
10
\theta^3
\left[
\begin{array}{c}
1 \\
\frac15
\end{array}
\right]
\theta^3
\left[
\begin{array}{c}
1 \\
\frac35
\end{array}
\right]
}.   \label{eqn:analogue-Jacobi-(1,3/5)}
\end{align}
}
\end{theorem}

\begin{proof}
Consider the following elliptic functions:
\begin{equation*}
\varphi(z)
=
\frac
{
\theta^3
\left[
\begin{array}{c}
1 \\
1
\end{array}
\right](z, \tau)
}
{
\theta^2
\left[
\begin{array}{c}
1 \\
\frac15
\end{array}
\right](z, \tau)
\theta
\left[
\begin{array}{c}
1 \\
\frac35
\end{array}
\right](z, \tau)
}
\,\,
\mathrm{and}
\,\,
\psi(z)=
\frac
{
\theta^3
\left[
\begin{array}{c}
1 \\
1
\end{array}
\right](z, \tau)
}
{
\theta^2
\left[
\begin{array}{c}
1 \\
\frac35
\end{array}
\right](z, \tau)
\theta
\left[
\begin{array}{c}
1 \\
-\frac15
\end{array}
\right](z, \tau)
}.
\end{equation*}
\par
Note that,
in the fundamental parallelogram,
the poles of $\varphi(z)$ are $z=2/5$ and $z=1/5.$
Thus, the direct calculation yields
\begin{equation*}
\mathrm{Res}\left(\varphi(z), \frac25\right)
=
\frac
{
\theta^3
\left[
\begin{array}{c}
1 \\
\frac15
\end{array}
\right]
}
{
\theta^{\prime}
\left[
\begin{array}{c}
1 \\
1
\end{array}
\right]^2
\theta
\left[
\begin{array}{c}
1 \\
\frac35
\end{array}
\right]
}
\left\{
-3
\frac
{
\theta^{\prime}
\left[
\begin{array}{c}
1 \\
\frac15
\end{array}
\right]
}
{
\theta
\left[
\begin{array}{c}
1 \\
\frac15
\end{array}
\right]
}
+
\frac
{
\theta^{\prime}
\left[
\begin{array}{c}
1 \\
\frac35
\end{array}
\right]
}
{
\theta
\left[
\begin{array}{c}
1 \\
\frac35
\end{array}
\right]
}
\right\}
\end{equation*}
and
\begin{equation*}
\mathrm{Res}\left(\varphi(z), \frac15  \right)
=
-
\frac
{
\theta
\left[
\begin{array}{c}
1 \\
\frac35
\end{array}
\right]
}
{
\theta^{\prime}
\left[
\begin{array}{c}
1 \\
1
\end{array}
\right]
}.
\end{equation*}
Because
$
\mathrm{Res}\left(\varphi(z), 2/5\right)+\mathrm{Res}\left(\varphi(z),1/5 \right)=0,
$
it is understood that
\begin{equation*}
3
\frac
{
\theta^{\prime}
\left[
\begin{array}{c}
1 \\
\frac15
\end{array}
\right]
}
{
\theta
\left[
\begin{array}{c}
1 \\
\frac15
\end{array}
\right]
}
-
\frac
{
\theta^{\prime}
\left[
\begin{array}{c}
1 \\
\frac35
\end{array}
\right]
}
{
\theta
\left[
\begin{array}{c}
1 \\
\frac35
\end{array}
\right]
}
=
-
\frac
{
\theta^{\prime}
\left[
\begin{array}{c}
1 \\
1
\end{array}
\right]
\theta^2
\left[
\begin{array}{c}
1 \\
\frac35
\end{array}
\right]
}
{
\theta^3
\left[
\begin{array}{c}
1 \\
\frac15
\end{array}
\right]
}.
\end{equation*}
\par
From $\psi(z),$ we have
\begin{equation*}
\frac
{
\theta^{\prime}
\left[
\begin{array}{c}
1 \\
\frac15
\end{array}
\right]
}
{
\theta
\left[
\begin{array}{c}
1 \\
\frac15
\end{array}
\right]
}
+3
\frac
{
\theta^{\prime}
\left[
\begin{array}{c}
1 \\
\frac35
\end{array}
\right]
}
{
\theta
\left[
\begin{array}{c}
1 \\
\frac35
\end{array}
\right]
}
=
\frac
{
\theta^{\prime}
\left[
\begin{array}{c}
1 \\
1
\end{array}
\right]
\theta^2
\left[
\begin{array}{c}
1 \\
\frac15
\end{array}
\right]
}
{
\theta^3
\left[
\begin{array}{c}
1 \\
\frac35
\end{array}
\right]
},
\end{equation*}
which proves the theorem.
\end{proof}

\begin{theorem}
\label{thm:derivative-level-5-(1/5,1)-(3/5,1)}
{\it
For every $\tau\in\mathbb{H}^2,$ we have 
\begin{align}
&
\frac{
\theta^{\prime}
\left[
\begin{array}{c}
\frac15 \\
1
\end{array}
\right]
}
{
\theta
\left[
\begin{array}{c}
\frac15 \\
1
\end{array}
\right]
}
=
-
\zeta_5^3
\theta^{\prime}
\left[
\begin{array}{c}
1 \\
1
\end{array}
\right]
\frac
{
\left(
\theta^5
\left[
\begin{array}{c}
\frac15 \\
1
\end{array}
\right]
+
3
\theta^5
\left[
\begin{array}{c}
\frac35 \\
1
\end{array}
\right]
\right)
}
{
10
\theta^3
\left[
\begin{array}{c}
\frac15 \\
1
\end{array}
\right]
\theta^3
\left[
\begin{array}{c}
\frac35 \\
1
\end{array}
\right]
},   \label{eqn:analogue-Jacobi-(1/5,1)}  \\
&
\frac
{
\theta^{\prime}
\left[
\begin{array}{c}
\frac35 \\
1
\end{array}
\right]
}
{
\theta
\left[
\begin{array}{c}
\frac35 \\
1
\end{array}
\right]
}
=
-
\zeta_5^3
\theta^{\prime}
\left[
\begin{array}{c}
1 \\
1
\end{array}
\right]
\frac
{
\left(
3
\theta^5
\left[
\begin{array}{c}
\frac15 \\
1
\end{array}
\right]
-
\theta^5
\left[
\begin{array}{c}
\frac35 \\
1
\end{array}
\right]
\right)
}
{
10
\theta^3
\left[
\begin{array}{c}
\frac15 \\
1
\end{array}
\right]
\theta^3
\left[
\begin{array}{c}
\frac35 \\
1
\end{array}
\right]
}. \label{eqn:analogue-Jacobi-(3/5,1)}
\end{align}
}
\end{theorem}

\begin{proof}
Consider the following elliptic functions:
\begin{equation*}
\varphi(z)
=
\frac
{
\theta^3
\left[
\begin{array}{c}
1 \\
1
\end{array}
\right](z, \tau)
}
{
\theta^2
\left[
\begin{array}{c}
\frac15 \\
1
\end{array}
\right](z, \tau)
\theta
\left[
\begin{array}{c}
\frac35 \\
1
\end{array}
\right](z, \tau)
}
\,\,
\mathrm{and}
\,\,
\psi(z)=
\frac
{
\theta^3
\left[
\begin{array}{c}
1 \\
1
\end{array}
\right](z, \tau)
}
{
\theta^2
\left[
\begin{array}{c}
\frac35 \\
1
\end{array}
\right](z, \tau)
\theta
\left[
\begin{array}{c}
-\frac15 \\
1
\end{array}
\right](z, \tau)
}.
\end{equation*}
Then, from $\varphi(z)$ and $\psi(z),$ we have
\begin{equation}
\label{eqn:relation-(1/5,1)-(3/5,1)-(1)}
3
\frac
{
\theta^{\prime}
\left[
\begin{array}{c}
\frac15 \\
1
\end{array}
\right]
}
{
\theta
\left[
\begin{array}{c}
\frac15 \\
1
\end{array}
\right]
}
-
\frac
{
\theta^{\prime}
\left[
\begin{array}{c}
\frac35 \\
1
\end{array}
\right]
}
{
\theta
\left[
\begin{array}{c}
\frac35 \\
1
\end{array}
\right]
}
=
-\zeta_5^3
\frac
{
\theta^{\prime}
\left[
\begin{array}{c}
1 \\
1
\end{array}
\right]
\theta^2
\left[
\begin{array}{c}
\frac35 \\
1
\end{array}
\right]
}
{
\theta^3
\left[
\begin{array}{c}
\frac15 \\
1
\end{array}
\right]
}
\end{equation}
and
\begin{equation}
\label{eqn:relation-(1/5,1)-(3/5,1)-(2)}
\frac
{
\theta^{\prime}
\left[
\begin{array}{c}
\frac15 \\
1
\end{array}
\right]
}
{
\theta
\left[
\begin{array}{c}
\frac15 \\
1
\end{array}
\right]
}
+3
\frac
{
\theta^{\prime}
\left[
\begin{array}{c}
\frac35 \\
1
\end{array}
\right]
}
{
\theta
\left[
\begin{array}{c}
\frac35 \\
1
\end{array}
\right]
}
=
-\zeta_5^3
\frac
{
\theta^{\prime}
\left[
\begin{array}{c}
1 \\
1
\end{array}
\right]
\theta^2
\left[
\begin{array}{c}
\frac15 \\
1
\end{array}
\right]
}
{
\theta^3
\left[
\begin{array}{c}
\frac35 \\
1
\end{array}
\right]
}.
\end{equation}
Thus, the theorem is proved.
\end{proof}

\subsection{Second-order theta derivatives}

\begin{proposition}
\label{prop:2nd-derivative-(1,1/5)-(1,3/5)}
{\it
For every $\tau\in\mathbb{H}^2,$ we have 
\begin{equation}
\label{eqn:2nd-deri-(1,1/5)-(1,3/5)-(1)}
2
\frac
{
\theta^{\prime \prime}
\left[
\begin{array}{c}
1 \\
\frac15
\end{array}
\right]
}
{
\theta
\left[
\begin{array}{c}
1 \\
\frac15
\end{array}
\right]
}
+
\frac
{
\theta^{\prime \prime}
\left[
\begin{array}{c}
1 \\
\frac35
\end{array}
\right]
}
{
\theta
\left[
\begin{array}{c}
1 \\
\frac35
\end{array}
\right]
}
-
\frac
{
\theta^{\prime\prime \prime}
\left[
\begin{array}{c}
1 \\
1
\end{array}
\right]
}
{
\theta^{\prime}
\left[
\begin{array}{c}
1 \\
1
\end{array}
\right]
}
+
2
\left\{
\frac
{
\theta^{\prime}
\left[
\begin{array}{c}
1 \\
\frac15
\end{array}
\right]
}
{
\theta
\left[
\begin{array}{c}
1 \\
\frac15
\end{array}
\right]
}
\right\}^2
+
4
\frac
{
\theta^{\prime}
\left[
\begin{array}{c}
1 \\
\frac15
\end{array}
\right]
}
{
\theta
\left[
\begin{array}{c}
1 \\
\frac15
\end{array}
\right]
}
\cdot
\frac
{
\theta^{\prime}
\left[
\begin{array}{c}
1 \\
\frac35
\end{array}
\right]
}
{
\theta
\left[
\begin{array}{c}
1 \\
\frac35
\end{array}
\right]
}
=0,
\end{equation}
\begin{equation}
\label{eqn:2nd-deri-(1,1/5)-(1,3/5)-(2)}
\frac
{
\theta^{\prime \prime}
\left[
\begin{array}{c}
1 \\
\frac15
\end{array}
\right]
}
{
\theta
\left[
\begin{array}{c}
1 \\
\frac15
\end{array}
\right]
}
+
2
\frac
{
\theta^{\prime \prime}
\left[
\begin{array}{c}
1 \\
\frac35
\end{array}
\right]
}
{
\theta
\left[
\begin{array}{c}
1 \\
\frac35
\end{array}
\right]
}
-
\frac
{
\theta^{\prime\prime \prime}
\left[
\begin{array}{c}
1 \\
1
\end{array}
\right]
}
{
\theta^{\prime}
\left[
\begin{array}{c}
1 \\
1
\end{array}
\right]
}
-4
\frac
{
\theta^{\prime}
\left[
\begin{array}{c}
1 \\
\frac15
\end{array}
\right]
}
{
\theta
\left[
\begin{array}{c}
1 \\
\frac15
\end{array}
\right]
}
\cdot
\frac
{
\theta^{\prime}
\left[
\begin{array}{c}
1 \\
\frac35
\end{array}
\right]
}
{
\theta
\left[
\begin{array}{c}
1 \\
\frac35
\end{array}
\right]
}
+
2
\left\{
\frac
{
\theta^{\prime}
\left[
\begin{array}{c}
1 \\
\frac35
\end{array}
\right]
}
{
\theta
\left[
\begin{array}{c}
1 \\
\frac35
\end{array}
\right]
}
\right\}^2
=0.
\end{equation}
}
\end{proposition}

\begin{proof}
Consider the following elliptic functions: 
\begin{equation*}
\frac
{
\theta^2
\left[
\begin{array}{c}
1 \\
\frac15
\end{array}
\right]
(z,\tau)
\theta
\left[
\begin{array}{c}
1 \\
\frac35
\end{array}
\right]
(z,\tau)
}
{
\theta^3
\left[
\begin{array}{c}
1 \\
1
\end{array}
\right]
(z,\tau)
}, \quad
\frac
{
\theta
\left[
\begin{array}{c}
1 \\
-\frac15
\end{array}
\right]
(z,\tau)
\theta^2
\left[
\begin{array}{c}
1 \\
\frac35
\end{array}
\right]
(z,\tau)
}
{
\theta^3
\left[
\begin{array}{c}
1 \\
1
\end{array}
\right]
(z,\tau)
}.
\end{equation*}
The proposition can be proved using Theorem \ref{thm-fundamental-elliptic-function}. 
\end{proof}

\begin{proposition}
\label{prop:2nd-derivative-(1/5,1)-(3/5,1)}
{\it
For every $\tau\in\mathbb{H}^2,$ we have 
\begin{equation}
\label{eqn:2nd-deri-(1/5,1)-(3/5,1)-(1)}
2
\frac
{
\theta^{\prime \prime}
\left[
\begin{array}{c}
\frac15 \\
1
\end{array}
\right]
}
{
\theta
\left[
\begin{array}{c}
\frac15 \\
1
\end{array}
\right]
}
+
\frac
{
\theta^{\prime \prime}
\left[
\begin{array}{c}
\frac35 \\
1
\end{array}
\right]
}
{
\theta
\left[
\begin{array}{c}
\frac35\\
1
\end{array}
\right]
}
-
\frac
{
\theta^{\prime\prime \prime}
\left[
\begin{array}{c}
1 \\
1
\end{array}
\right]
}
{
\theta^{\prime}
\left[
\begin{array}{c}
1 \\
1
\end{array}
\right]
}
+
2
\left\{
\frac
{
\theta^{\prime}
\left[
\begin{array}{c}
\frac15 \\
1
\end{array}
\right]
}
{
\theta
\left[
\begin{array}{c}
\frac15 \\
1
\end{array}
\right]
}
\right\}^2
+
4
\frac
{
\theta^{\prime}
\left[
\begin{array}{c}
\frac15 \\
1
\end{array}
\right]
}
{
\theta
\left[
\begin{array}{c}
\frac15 \\
1
\end{array}
\right]
}
\cdot
\frac
{
\theta^{\prime}
\left[
\begin{array}{c}
\frac35 \\
1
\end{array}
\right]
}
{
\theta
\left[
\begin{array}{c}
\frac35\\
1
\end{array}
\right]
}
=0,
\end{equation}
\begin{equation}
\label{eqn:2nd-deri-(1/5,1)-(3/5,1)-(2)}
\frac
{
\theta^{\prime \prime}
\left[
\begin{array}{c}
\frac15\\
1
\end{array}
\right]
}
{
\theta
\left[
\begin{array}{c}
\frac15 \\
1
\end{array}
\right]
}
+
2
\frac
{
\theta^{\prime \prime}
\left[
\begin{array}{c}
\frac35 \\
1
\end{array}
\right]
}
{
\theta
\left[
\begin{array}{c}
\frac35\\
1
\end{array}
\right]
}
-
\frac
{
\theta^{\prime\prime \prime}
\left[
\begin{array}{c}
1 \\
1
\end{array}
\right]
}
{
\theta^{\prime}
\left[
\begin{array}{c}
1 \\
1
\end{array}
\right]
}
-4
\frac
{
\theta^{\prime}
\left[
\begin{array}{c}
\frac15 \\
1
\end{array}
\right]
}
{
\theta
\left[
\begin{array}{c}
\frac15 \\
1
\end{array}
\right]
}
\cdot
\frac
{
\theta^{\prime}
\left[
\begin{array}{c}
\frac35 \\
1
\end{array}
\right]
}
{
\theta
\left[
\begin{array}{c}
\frac35\\
1
\end{array}
\right]
}
+
2
\left\{
\frac
{
\theta^{\prime}
\left[
\begin{array}{c}
\frac35 \\
1
\end{array}
\right]
}
{
\theta
\left[
\begin{array}{c}
\frac35 \\
1
\end{array}
\right]
}
\right\}^2
=0.
\end{equation}
}
\end{proposition}

\begin{proof}
Consider the following elliptic functions: 
\begin{equation*}
\frac
{
\theta^2
\left[
\begin{array}{c}
\frac15 \\
1
\end{array}
\right]
(z,\tau)
\theta
\left[
\begin{array}{c}
\frac35 \\
1
\end{array}
\right]
(z,\tau)
}
{
\theta^3
\left[
\begin{array}{c}
1 \\
1
\end{array}
\right]
(z,\tau)
}, \quad
\frac
{
\theta
\left[
\begin{array}{c}
-\frac15 \\
1
\end{array}
\right]
(z,\tau)
\theta^2
\left[
\begin{array}{c}
\frac35 \\
1
\end{array}
\right]
(z,\tau)
}
{
\theta^3
\left[
\begin{array}{c}
1 \\
1
\end{array}
\right]
(z,\tau)
}.
\end{equation*}
The proposition can be proved using Theorem \ref{thm-fundamental-elliptic-function}. 
\end{proof}

\subsection{Third-order theta derivatives}

\begin{proposition}
\label{prop:3rd-derivative-(1,1/5)-(1,3/5)}
{\it
For every $\tau\in\mathbb{H}^2,$ we have 
\begin{align*}
&
\frac
{
\theta^{(3)}
\left[
\begin{array}{c}
1 \\
\frac{1}{5}
\end{array}
\right]
}
{
\theta
\left[
\begin{array}{c}
1 \\
\frac{1}{5}
\end{array}
\right]
}
+
3
\frac
{
\theta^{(3)}
\left[
\begin{array}{c}
1 \\
\frac{3}{5}
\end{array}
\right]
}
{
\theta
\left[
\begin{array}{c}
1 \\
\frac{3}{5}
\end{array}
\right]
}
+
9
\frac
{
\theta^{(2)}
\left[
\begin{array}{c}
1 \\
\frac{1}{5}
\end{array}
\right]
}
{
\theta
\left[
\begin{array}{c}
1 \\
\frac{1}{5}
\end{array}
\right]
}
\frac
{
\theta^{(1)}
\left[
\begin{array}{c}
1 \\
\frac{3}{5}
\end{array}
\right]
}
{
\theta
\left[
\begin{array}{c}
1 \\
\frac{3}{5}
\end{array}
\right]
}
+
18
\frac
{
\theta^{(1)}
\left[
\begin{array}{c}
1 \\
\frac{1}{5}
\end{array}
\right]
}
{
\theta
\left[
\begin{array}{c}
1 \\
\frac{1}{5}
\end{array}
\right]
}
\left\{
\frac
{
\theta^{(1)}
\left[
\begin{array}{c}
1 \\
\frac{3}{5}
\end{array}
\right]
}
{
\theta
\left[
\begin{array}{c}
1 \\
\frac{3}{5}
\end{array}
\right]
}
\right\}^2  
+
9
\frac
{
\theta^{(1)}
\left[
\begin{array}{c}
1 \\
\frac{1}{5}
\end{array}
\right]
}
{
\theta
\left[
\begin{array}{c}
1 \\
\frac{1}{5}
\end{array}
\right]
}
\frac
{
\theta^{(2)}
\left[
\begin{array}{c}
1 \\
\frac{3}{5}
\end{array}
\right]
}
{
\theta
\left[
\begin{array}{c}
1 \\
\frac{3}{5}
\end{array}
\right]
}   \\
&+
6
\left\{
\frac
{
\theta^{(1)}
\left[
\begin{array}{c}
1 \\
\frac{3}{5}
\end{array}
\right]
}
{
\theta
\left[
\begin{array}{c}
1 \\
\frac{3}{5}
\end{array}
\right]
}
\right\}^3
+
18
\frac
{
\theta^{(1)}
\left[
\begin{array}{c}
1 \\
\frac{3}{5}
\end{array}
\right]
}
{
\theta
\left[
\begin{array}{c}
1 \\
\frac{3}{5}
\end{array}
\right]
}
\frac
{
\theta^{(2)}
\left[
\begin{array}{c}
1 \\
\frac{3}{5}
\end{array}
\right]
}
{
\theta
\left[
\begin{array}{c}
1 \\
\frac{3}{5}
\end{array}
\right]
}   
-
4
\frac
{
\theta^{(3)}
\left[
\begin{array}{c}
1 \\
1
\end{array}
\right]
}
{
\theta^{(1)}
\left[
\begin{array}{c}
1 \\
1
\end{array}
\right]
} 
\left(
\frac
{
\theta^{(1)}
\left[
\begin{array}{c}
1 \\
\frac{1}{5}
\end{array}
\right]
}
{
\theta
\left[
\begin{array}{c}
1 \\
\frac{1}{5}
\end{array}
\right]
}
+
3
\frac
{
\theta^{(1)}
\left[
\begin{array}{c}
1 \\
\frac{3}{5}
\end{array}
\right]
}
{
\theta
\left[
\begin{array}{c}
1 \\
\frac{3}{5}
\end{array}
\right]
}
\right)  =0,
\end{align*}
and
\begin{align*}
&
3
\frac
{
\theta^{(3)}
\left[
\begin{array}{c}
1 \\
\frac{1}{5}
\end{array}
\right]
}
{
\theta
\left[
\begin{array}{c}
1 \\
\frac{1}{5}
\end{array}
\right]
}
-
\frac
{
\theta^{(3)}
\left[
\begin{array}{c}
1 \\
\frac{3}{5}
\end{array}
\right]
}
{
\theta
\left[
\begin{array}{c}
1 \\
\frac{3}{5}
\end{array}
\right]
}
+
9
\frac
{
\theta^{(1)}
\left[
\begin{array}{c}
1 \\
\frac{1}{5}
\end{array}
\right]
}
{
\theta
\left[
\begin{array}{c}
1 \\
\frac{1}{5}
\end{array}
\right]
}
\frac
{
\theta^{(2)}
\left[
\begin{array}{c}
1 \\
\frac{3}{5}
\end{array}
\right]
}
{
\theta
\left[
\begin{array}{c}
1 \\
\frac{3}{5}
\end{array}
\right]
}
-
18
\frac
{
\theta^{(1)}
\left[
\begin{array}{c}
1 \\
\frac{3}{5}
\end{array}
\right]
}
{
\theta
\left[
\begin{array}{c}
1 \\
\frac{3}{5}
\end{array}
\right]
}
\left\{
\frac
{
\theta^{(1)}
\left[
\begin{array}{c}
1 \\
\frac{1}{5}
\end{array}
\right]
}
{
\theta
\left[
\begin{array}{c}
1 \\
\frac{1}{5}
\end{array}
\right]
}
\right\}^2  
-
9
\frac
{
\theta^{(2)}
\left[
\begin{array}{c}
1 \\
\frac{1}{5}
\end{array}
\right]
}
{
\theta
\left[
\begin{array}{c}
1 \\
\frac{1}{5}
\end{array}
\right]
}
\frac
{
\theta^{(1)}
\left[
\begin{array}{c}
1 \\
\frac{3}{5}
\end{array}
\right]
}
{
\theta
\left[
\begin{array}{c}
1 \\
\frac{3}{5}
\end{array}
\right]
}  \\
&+
6
\left\{
\frac
{
\theta^{(1)}
\left[
\begin{array}{c}
1 \\
\frac{1}{5}
\end{array}
\right]
}
{
\theta
\left[
\begin{array}{c}
1 \\
\frac{1}{5}
\end{array}
\right]
}
\right\}^3
+
18
\frac
{
\theta^{(1)}
\left[
\begin{array}{c}
1 \\
\frac{1}{5}
\end{array}
\right]
}
{
\theta
\left[
\begin{array}{c}
1 \\
\frac{1}{5}
\end{array}
\right]
}
\frac
{
\theta^{(2)}
\left[
\begin{array}{c}
1 \\
\frac{1}{5}
\end{array}
\right]
}
{
\theta
\left[
\begin{array}{c}
1 \\
\frac{1}{5}
\end{array}
\right]
}  
-
4
\frac
{
\theta^{(3)}
\left[
\begin{array}{c}
1 \\
1
\end{array}
\right]
}
{
\theta^{(1)}
\left[
\begin{array}{c}
1 \\
1
\end{array}
\right]
} 
\left(
3
\frac
{
\theta^{(1)}
\left[
\begin{array}{c}
1 \\
\frac{1}{5}
\end{array}
\right]
}
{
\theta
\left[
\begin{array}{c}
1 \\
\frac{1}{5}
\end{array}
\right]
}
-
\frac
{
\theta^{(1)}
\left[
\begin{array}{c}
1 \\
\frac{3}{5}
\end{array}
\right]
}
{
\theta
\left[
\begin{array}{c}
1 \\
\frac{3}{5}
\end{array}
\right]
}
\right)  =0.
\end{align*}
}
\end{proposition}

\begin{proof}
Consider the following elliptic functions:
\begin{equation*}
\frac
{
\theta
\left[
\begin{array}{c}
1 \\
\frac{1}{5}
\end{array}
\right](z)
\theta^3
\left[
\begin{array}{c}
1 \\
\frac{3}{5}
\end{array}
\right](z)
}
{
\theta^4
\left[
\begin{array}{c}
1 \\
1
\end{array}
\right](z)
}, \quad
\frac
{
\theta^3
\left[
\begin{array}{c}
1 \\
\frac{1}{5}
\end{array}
\right](z)
\theta
\left[
\begin{array}{c}
1 \\
-\frac{3}{5}
\end{array}
\right](z)
}
{
\theta^4
\left[
\begin{array}{c}
1 \\
1
\end{array}
\right](z)
}.
\end{equation*}
\end{proof}

\begin{proposition}
\label{prop:3rd-derivative-(1/5,1)-(3/5,1)}
{\it
For every $\tau\in\mathbb{H}^2,$ we have 
\begin{align*}
&
\frac
{
\theta^{(3)}
\left[
\begin{array}{c}
\frac{1}{5} \\
1
\end{array}
\right]
}
{
\theta
\left[
\begin{array}{c}
\frac{1}{5} \\
1
\end{array}
\right]
}
+
3
\frac
{
\theta^{(3)}
\left[
\begin{array}{c}
\frac{3}{5} \\
1
\end{array}
\right]
}
{
\theta
\left[
\begin{array}{c}
\frac{3}{5} \\
1
\end{array}
\right]
}
+
9
\frac
{
\theta^{(2)}
\left[
\begin{array}{c}
\frac{1}{5} \\
1
\end{array}
\right]
}
{
\theta
\left[
\begin{array}{c}
\frac{1}{5}\\
1
\end{array}
\right]
}
\frac
{
\theta^{(1)}
\left[
\begin{array}{c}
\frac{3}{5}\\
1
\end{array}
\right]
}
{
\theta
\left[
\begin{array}{c}
\frac{3}{5}\\
1
\end{array}
\right]
}
+
18
\frac
{
\theta^{(1)}
\left[
\begin{array}{c}
\frac{1}{5}\\
1
\end{array}
\right]
}
{
\theta
\left[
\begin{array}{c}
\frac{1}{5} \\
1
\end{array}
\right]
}
\left\{
\frac
{
\theta^{(1)}
\left[
\begin{array}{c}
\frac{3}{5} \\
1
\end{array}
\right]
}
{
\theta
\left[
\begin{array}{c}
\frac{3}{5} \\
1
\end{array}
\right]
}
\right\}^2  
+
9
\frac
{
\theta^{(1)}
\left[
\begin{array}{c}
\frac{1}{5} \\
1
\end{array}
\right]
}
{
\theta
\left[
\begin{array}{c}
\frac{1}{5} \\
1
\end{array}
\right]
}
\frac
{
\theta^{(2)}
\left[
\begin{array}{c}
\frac{3}{5} \\
1
\end{array}
\right]
}
{
\theta
\left[
\begin{array}{c}
\frac{3}{5}\\
1
\end{array}
\right]
}   \\
&+
6
\left\{
\frac
{
\theta^{(1)}
\left[
\begin{array}{c}
\frac{3}{5} \\
1
\end{array}
\right]
}
{
\theta
\left[
\begin{array}{c}
\frac{3}{5} \\
1
\end{array}
\right]
}
\right\}^3
+
18
\frac
{
\theta^{(1)}
\left[
\begin{array}{c}
\frac{3}{5}\\
1
\end{array}
\right]
}
{
\theta
\left[
\begin{array}{c}
\frac{3}{5} \\
1
\end{array}
\right]
}
\frac
{
\theta^{(2)}
\left[
\begin{array}{c}
\frac{3}{5} \\
1
\end{array}
\right]
}
{
\theta
\left[
\begin{array}{c}
\frac{3}{5} \\
1
\end{array}
\right]
}   
-
4
\frac
{
\theta^{(3)}
\left[
\begin{array}{c}
1 \\
1
\end{array}
\right]
}
{
\theta^{(1)}
\left[
\begin{array}{c}
1 \\
1
\end{array}
\right]
} 
\left(
\frac
{
\theta^{(1)}
\left[
\begin{array}{c}
\frac{1}{5}\\
1
\end{array}
\right]
}
{
\theta
\left[
\begin{array}{c}
\frac{1}{5} \\
1
\end{array}
\right]
}
+
3
\frac
{
\theta^{(1)}
\left[
\begin{array}{c}
\frac{3}{5} \\
1
\end{array}
\right]
}
{
\theta
\left[
\begin{array}{c}
\frac{3}{5} \\
1
\end{array}
\right]
}
\right)  =0,
\end{align*}
and
\begin{align*}
&
3
\frac
{
\theta^{(3)}
\left[
\begin{array}{c}
\frac{1}{5} \\
1
\end{array}
\right]
}
{
\theta
\left[
\begin{array}{c}
\frac{1}{5}\\
1
\end{array}
\right]
}
-
\frac
{
\theta^{(3)}
\left[
\begin{array}{c}
\frac{3}{5} \\
1
\end{array}
\right]
}
{
\theta
\left[
\begin{array}{c}
\frac{3}{5} \\
1
\end{array}
\right]
}
+
9
\frac
{
\theta^{(1)}
\left[
\begin{array}{c}
\frac{1}{5} \\
1
\end{array}
\right]
}
{
\theta
\left[
\begin{array}{c}
\frac{1}{5} \\
1
\end{array}
\right]
}
\frac
{
\theta^{(2)}
\left[
\begin{array}{c}
\frac{3}{5} \\
1
\end{array}
\right]
}
{
\theta
\left[
\begin{array}{c}
\frac{3}{5} \\
1
\end{array}
\right]
}
-
18
\frac
{
\theta^{(1)}
\left[
\begin{array}{c}
\frac{3}{5} \\
1
\end{array}
\right]
}
{
\theta
\left[
\begin{array}{c}
\frac{3}{5} \\
1
\end{array}
\right]
}
\left\{
\frac
{
\theta^{(1)}
\left[
\begin{array}{c}
\frac{1}{5} \\
1
\end{array}
\right]
}
{
\theta
\left[
\begin{array}{c}
\frac{1}{5} \\
1
\end{array}
\right]
}
\right\}^2  
-
9
\frac
{
\theta^{(2)}
\left[
\begin{array}{c}
\frac{1}{5} \\
1
\end{array}
\right]
}
{
\theta
\left[
\begin{array}{c}
\frac{1}{5} \\
1
\end{array}
\right]
}
\frac
{
\theta^{(1)}
\left[
\begin{array}{c}
\frac{3}{5} \\
1
\end{array}
\right]
}
{
\theta
\left[
\begin{array}{c}
\frac{3}{5} \\
1
\end{array}
\right]
}  \\
&+
6
\left\{
\frac
{
\theta^{(1)}
\left[
\begin{array}{c}
\frac{1}{5}\\
1
\end{array}
\right]
}
{
\theta
\left[
\begin{array}{c}
\frac{1}{5} \\
1
\end{array}
\right]
}
\right\}^3
+
18
\frac
{
\theta^{(1)}
\left[
\begin{array}{c}
\frac{1}{5} \\
1
\end{array}
\right]
}
{
\theta
\left[
\begin{array}{c}
\frac{1}{5}\\
1
\end{array}
\right]
}
\frac
{
\theta^{(2)}
\left[
\begin{array}{c}
\frac{1}{5} \\
1
\end{array}
\right]
}
{
\theta
\left[
\begin{array}{c}
\frac{1}{5} \\
1
\end{array}
\right]
}  
-
4
\frac
{
\theta^{(3)}
\left[
\begin{array}{c}
1 \\
1
\end{array}
\right]
}
{
\theta^{(1)}
\left[
\begin{array}{c}
1 \\
1
\end{array}
\right]
} 
\left(
3
\frac
{
\theta^{(1)}
\left[
\begin{array}{c}
\frac{1}{5} \\
1
\end{array}
\right]
}
{
\theta
\left[
\begin{array}{c}
\frac{1}{5}\\
1
\end{array}
\right]
}
-
\frac
{
\theta^{(1)}
\left[
\begin{array}{c}
\frac{3}{5} \\
1
\end{array}
\right]
}
{
\theta
\left[
\begin{array}{c}
\frac{3}{5} \\
1
\end{array}
\right]
}
\right)  =0.
\end{align*}
}
\end{proposition}

\begin{proof}
Furthermore, consider the following elliptic functions:
\begin{equation*}
\frac
{
\theta
\left[
\begin{array}{c}
1 \\
\frac{1}{5}
\end{array}
\right](z)
\theta^3
\left[
\begin{array}{c}
1 \\
\frac{3}{5}
\end{array}
\right](z)
}
{
\theta^4
\left[
\begin{array}{c}
1 \\
1
\end{array}
\right](z)
}, \quad
\frac
{
\theta^3
\left[
\begin{array}{c}
1 \\
\frac{1}{5}
\end{array}
\right](z)
\theta
\left[
\begin{array}{c}
1 \\
-\frac{3}{5}
\end{array}
\right](z)
}
{
\theta^4
\left[
\begin{array}{c}
1 \\
1
\end{array}
\right](z)
}.
\end{equation*}
\end{proof}

\subsection{Fourth-order theta derivatives}

\begin{proposition}
\label{prop:4th-derivative-(1,1/5)-(1,3/5)}
{\it
For every $\tau\in\mathbb{H}^2$ and $j=1,3,$ we have 
\begin{align*}
&
\frac
{
\theta^{(4)}
\left[
\begin{array}{c}
1 \\
\frac{j}{5}
\end{array}
\right]
}
{
\theta
\left[
\begin{array}{c}
1 \\
\frac{j}{5}
\end{array}
\right]
}
+
16
\frac
{
\theta^{(1)}
\left[
\begin{array}{c}
1 \\
\frac{j}{5}
\end{array}
\right]
}
{
\theta
\left[
\begin{array}{c}
1 \\
\frac{j}{5}
\end{array}
\right]
}
\frac
{
\theta^{(3)}
\left[
\begin{array}{c}
1 \\
\frac{j}{5}
\end{array}
\right]
}
{
\theta
\left[
\begin{array}{c}
1 \\
\frac{j}{5}
\end{array}
\right]
}
+
12
\left\{
\frac
{
\theta^{(2)}
\left[
\begin{array}{c}
1 \\
\frac{j}{5}
\end{array}
\right]
}
{
\theta
\left[
\begin{array}{c}
1 \\
\frac{j}{5}
\end{array}
\right]
}
\right\}^2  
+
72
\left\{
\frac
{
\theta^{(1)}
\left[
\begin{array}{c}
1 \\
\frac{j}{5}
\end{array}
\right]
}
{
\theta
\left[
\begin{array}{c}
1 \\
\frac{j}{5}
\end{array}
\right]
}
\right\}^2
\frac
{
\theta^{(2)}
\left[
\begin{array}{c}
1 \\
\frac{j}{5}
\end{array}
\right]
}
{
\theta
\left[
\begin{array}{c}
1 \\
\frac{j}{5}
\end{array}
\right]
}  \\
+&
24
\left\{
\frac
{
\theta^{(1)}
\left[
\begin{array}{c}
1 \\
\frac{j}{5}
\end{array}
\right]
}
{
\theta
\left[
\begin{array}{c}
1 \\
\frac{j}{5}
\end{array}
\right]
}
\right\}^4  
-40
\frac
{
\theta^{(3)}
\left[
\begin{array}{c}
1 \\
1
\end{array}
\right]
}
{
\theta^{(1)}
\left[
\begin{array}{c}
1 \\
1
\end{array}
\right]
}
\left\{
\frac
{
\theta^{(1)}
\left[
\begin{array}{c}
1 \\
\frac{j}{5}
\end{array}
\right]
}
{
\theta
\left[
\begin{array}{c}
1 \\
\frac{j}{5}
\end{array}
\right]
}
\right\}^2
-
10
\frac
{
\theta^{(3)}
\left[
\begin{array}{c}
1 \\
1
\end{array}
\right]
}
{
\theta^{(1)}
\left[
\begin{array}{c}
1 \\
1
\end{array}
\right]
}
\frac
{
\theta^{(2)}
\left[
\begin{array}{c}
1 \\
\frac{j}{5}
\end{array}
\right]
}
{
\theta
\left[
\begin{array}{c}
1 \\
\frac{j}{5}
\end{array}
\right]
}   \\
&-\frac15
\frac
{
\theta^{(5)}
\left[
\begin{array}{c}
1 \\
1
\end{array}
\right]
}
{
\theta^{(1)}
\left[
\begin{array}{c}
1 \\
1
\end{array}
\right]
}
+
2
\left\{
\frac
{
\theta^{(3)}
\left[
\begin{array}{c}
1 \\
1
\end{array}
\right]
}
{
\theta^{(1)}
\left[
\begin{array}{c}
1 \\
1
\end{array}
\right]
}
\right\}^2=0.
\end{align*}
}
\end{proposition}

\begin{proof}
Consider the following elliptic functions:
$
f_j(z)
=
\theta^5
\left[
\begin{array}{c}
1 \\
\frac{j}{5}
\end{array}
\right]
(z,\tau)
/
\theta^5
\left[
\begin{array}{c}
1 \\
1
\end{array}
\right]
(z,\tau).
$ 
\end{proof}

\begin{proposition}
\label{prop:4th-derivative-(1/5,1)-(3/5,1)}
{\it
For every $\tau\in\mathbb{H}^2$ and $j=1,3,$ we have 
\begin{align*}
&
\frac
{
\theta^{(4)}
\left[
\begin{array}{c}
\frac{j}{5} \\
1
\end{array}
\right]
}
{
\theta
\left[
\begin{array}{c}
\frac{j}{5} \\
1
\end{array}
\right]
}
+
16
\frac
{
\theta^{(1)}
\left[
\begin{array}{c}
\frac{j}{5} \\
1
\end{array}
\right]
}
{
\theta
\left[
\begin{array}{c}
\frac{j}{5} \\
1
\end{array}
\right]
}
\frac
{
\theta^{(3)}
\left[
\begin{array}{c}
\frac{j}{5} \\
1
\end{array}
\right]
}
{
\theta
\left[
\begin{array}{c}
\frac{j}{5} \\
1
\end{array}
\right]
}
+
12
\left\{
\frac
{
\theta^{(2)}
\left[
\begin{array}{c}
\frac{j}{5} \\
1
\end{array}
\right]
}
{
\theta
\left[
\begin{array}{c}
\frac{j}{5} \\
1
\end{array}
\right]
}
\right\}^2  
+
72
\left\{
\frac
{
\theta^{(1)}
\left[
\begin{array}{c}
\frac{j}{5} \\
1
\end{array}
\right]
}
{
\theta
\left[
\begin{array}{c}
\frac{j}{5} \\
1
\end{array}
\right]
}
\right\}^2
\frac
{
\theta^{(2)}
\left[
\begin{array}{c}
\frac{j}{5} \\
1
\end{array}
\right]
}
{
\theta
\left[
\begin{array}{c}
\frac{j}{5} \\
1
\end{array}
\right]
}  \\
+&
24
\left\{
\frac
{
\theta^{(1)}
\left[
\begin{array}{c}
\frac{j}{5} \\
1
\end{array}
\right]
}
{
\theta
\left[
\begin{array}{c}
\frac{j}{5} \\
1
\end{array}
\right]
}
\right\}^4  
-40
\frac
{
\theta^{(3)}
\left[
\begin{array}{c}
1 \\
1
\end{array}
\right]
}
{
\theta^{(1)}
\left[
\begin{array}{c}
1 \\
1
\end{array}
\right]
}
\left\{
\frac
{
\theta^{(1)}
\left[
\begin{array}{c}
\frac{j}{5} \\
1
\end{array}
\right]
}
{
\theta
\left[
\begin{array}{c}
\frac{j}{5} \\
1
\end{array}
\right]
}
\right\}^2
-
10
\frac
{
\theta^{(3)}
\left[
\begin{array}{c}
1 \\
1
\end{array}
\right]
}
{
\theta^{(1)}
\left[
\begin{array}{c}
1 \\
1
\end{array}
\right]
}
\frac
{
\theta^{(2)}
\left[
\begin{array}{c}
\frac{j}{5} \\
1
\end{array}
\right]
}
{
\theta
\left[
\begin{array}{c}
\frac{j}{5} \\
1
\end{array}
\right]
}   \\
&-\frac15
\frac
{
\theta^{(5)}
\left[
\begin{array}{c}
1 \\
1
\end{array}
\right]
}
{
\theta^{(1)}
\left[
\begin{array}{c}
1 \\
1
\end{array}
\right]
}
+
2
\left\{
\frac
{
\theta^{(3)}
\left[
\begin{array}{c}
1 \\
1
\end{array}
\right]
}
{
\theta^{(1)}
\left[
\begin{array}{c}
1 \\
1
\end{array}
\right]
}
\right\}^2=0.
\end{align*}
}
\end{proposition}

\begin{proof}
Consider the following elliptic functions:
$
f_j(z)
=
\theta^5
\left[
\begin{array}{c}
\frac{j}{5} \\
1
\end{array}
\right]
(z,\tau)
/
\theta^5
\left[
\begin{array}{c}
1 \\
1
\end{array}
\right]
(z,\tau).
$ 
\end{proof}

\section{Proof of Theorem \ref{thm-level5-E_2(q)}}
\label{sec:proof:level5-E_2(q)}

\subsection{Notations}
For every $\tau\in\mathbb{H}^2,$ set $q=\exp(2\pi i \tau)$ and 
\begin{equation*}
X=
\frac
{
\theta^{\prime}
\left[
\begin{array}{c}
1 \\
\frac15
\end{array}
\right]
}
{
\theta
\left[
\begin{array}{c}
1 \\
\frac15
\end{array}
\right]
},  \,\,
Y=
\frac
{
\theta^{\prime}
\left[
\begin{array}{c}
1 \\
\frac35
\end{array}
\right]
}
{
\theta
\left[
\begin{array}{c}
1 \\
\frac35
\end{array}
\right]
},  \,\,
Z=
\frac
{
\theta^{\prime  \prime  \prime}
\left[
\begin{array}{c}
1 \\
1
\end{array}
\right]
}
{
\theta^{\prime  }
\left[
\begin{array}{c}
1 \\
1
\end{array}
\right]
}. 
\end{equation*}

\subsection{ODEs for $P(q)$ and $Q(q)$}

\begin{proof}
Based on Proposition \ref{prop:2nd-derivative-(1,1/5)-(1,3/5)}, we have 
\begin{equation}
\label{eqn:2nd-deri-X-Y-(1,1/5)-(1,3/5)}
\frac{
\theta^{\prime \prime}
\left[
\begin{array}{c}
1 \\
\frac15
\end{array}
\right]  
}
{ 
\theta
\left[
\begin{array}{c}
1 \\
\frac15
\end{array}
\right]
}
=
\frac13 Z
-4XY-\frac43X^2+\frac23Y^2,   \,\,
\frac{
\theta^{\prime \prime}
\left[
\begin{array}{c}
1 \\
\frac35
\end{array}
\right]  
}
{ 
\theta
\left[
\begin{array}{c}
1 \\
\frac35
\end{array}
\right]
}
=
\frac13 Z
+4XY+\frac23X^2-\frac43Y^2.  
\end{equation}
Based on Proposition \ref{prop:3rd-derivative-(1,1/5)-(1,3/5)}, we obtain
\begin{equation}
\label{eqn:3rd-deri-X-Y-(1,1/5)-(1,3/5)}
\frac{
\theta^{\prime \prime \prime}
\left[
\begin{array}{c}
1 \\
\frac15
\end{array}
\right]  
}
{ 
\theta
\left[
\begin{array}{c}
1 \\
\frac15
\end{array}
\right]
}
=
3X^3+9X^2Y-15XY^2+3Y^3+XZ,   \,\,
\frac{
\theta^{\prime \prime \prime}
\left[
\begin{array}{c}
1 \\
\frac35
\end{array}
\right]  
}
{ 
\theta
\left[
\begin{array}{c}
1 \\
\frac35
\end{array}
\right]
}
=
-3X^3-15X^2Y-9XY^2+3Y^3+YZ.  
\end{equation}
\par
Because 
\begin{equation*}
\frac{
\theta^{\prime \prime \prime}
\left[
\begin{array}{c}
1 \\
\frac{j}{5}
\end{array}
\right]  
}
{ 
\theta
\left[
\begin{array}{c}
1 \\
\frac{j}{5}
\end{array}
\right]
}
=
4 \pi i
\frac{d}{d\tau}
\left\{
\frac{
\theta^{\prime }
\left[
\begin{array}{c}
1 \\
\frac{j}{5}
\end{array}
\right]  
}
{ 
\theta
\left[
\begin{array}{c}
1 \\
\frac{j}{5}
\end{array}
\right]
}
\right\}
+
\frac{
\theta^{\prime }
\left[
\begin{array}{c}
1 \\
\frac{j}{5}
\end{array}
\right]  
}
{ 
\theta
\left[
\begin{array}{c}
1 \\
\frac{j}{5}
\end{array}
\right]
}
\cdot
\frac{
\theta^{\prime \prime }
\left[
\begin{array}{c}
1 \\
\frac{j}{5}
\end{array}
\right]  
}
{ 
\theta
\left[
\begin{array}{c}
1 \\
\frac{j}{5}
\end{array}
\right]
},  \,\,(j=1,3), 
\end{equation*}
it follows that 
\begin{align*}
4\pi i X^{\prime}=&
\frac{13}{3}X^3+13X^2Y-\frac{47}{3}XY^2+3Y^3+\frac23XZ,  \\
4\pi i Y^{\prime}=&
-3X^3-\frac{47}{3}X^2Y-13XY^2+\frac{13}{3}Y^3+\frac23YZ, 
\end{align*}
where ${ }^{\prime}=d/d\tau. $ 
By considering $d/d\tau=2 \pi i q d/dq,$ we obtain the ODEs for $P(q)$ and $Q(q)$. 
\end{proof}

\subsection{ODE for $R(q)$}

\begin{proof}
First, note that 
\begin{align*}
\frac{
\theta^{(4)}
\left[
\begin{array}{c}
1 \\
\frac15
\end{array}
\right]  
}
{ 
\theta
\left[
\begin{array}{c}
1 \\
\frac15
\end{array}
\right]
}
=&
4 \pi i
\frac{d}{d\tau}
\left\{
\frac{
\theta^{\prime \prime }
\left[
\begin{array}{c}
1 \\
\frac15
\end{array}
\right]  
}
{ 
\theta
\left[
\begin{array}{c}
1 \\
\frac15
\end{array}
\right]
}
\right\}
+
\left\{
\frac{
\theta^{\prime \prime}
\left[
\begin{array}{c}
1 \\
\frac15
\end{array}
\right]  
}
{ 
\theta
\left[
\begin{array}{c}
1 \\
\frac15
\end{array}
\right]
}
\right\}^2,   \\
=&
\frac{4\pi i}{3}\frac{dZ}{d\tau}
+\frac{20}{9}X^4+\frac{52}{3}X^3Y+\frac{316}{9}X^2Y^2+\frac{44}{3}XY^3-\frac{52}{9}Y^4  \\
&\quad -8XYZ-\frac83X^2Z+\frac43Y^2Z+\frac19Z^2,  
\end{align*}
and 
\begin{align*}
\frac{
\theta^{(5)}
\left[
\begin{array}{c}
1 \\
1
\end{array}
\right]  
}
{ 
\theta^{(1)}
\left[
\begin{array}{c}
1 \\
1
\end{array}
\right]
}
=&
4 \pi i
\frac{d}{d\tau}
\left\{
\frac{
\theta^{(3) }
\left[
\begin{array}{c}
1 \\
1
\end{array}
\right]  
}
{ 
\theta^{(1)}
\left[
\begin{array}{c}
1 \\
1
\end{array}
\right]
}
\right\}
+
\left\{
\frac{
\theta^{(3)}
\left[
\begin{array}{c}
1 \\
1
\end{array}
\right]  
}
{ 
\theta^{(1)}
\left[
\begin{array}{c}
1 \\
1
\end{array}
\right]
}
\right\}^2   
=
4\pi i \frac{dZ}{d\tau}+Z^2.
\end{align*}
By using Proposition \ref{prop:4th-derivative-(1,1/5)-(1,3/5)}, 
we have 
\begin{equation*}
4\pi i Z^{\prime}=
\frac{10}{3}X^4-10X^3Y-\frac{310}{3}X^2Y^2+10XY^3+\frac{10}{3}Y^4+\frac23 Z^2,
\end{equation*}
and
\begin{equation*}
\frac{
\theta^{(4)}
\left[
\begin{array}{c}
1 \\
\frac15
\end{array}
\right]  
}
{ 
\theta
\left[
\begin{array}{c}
1 \\
\frac15
\end{array}
\right]
}
=
\frac{10}{3}X^4+14X^3Y+\frac23 X^2Y^2+18XY^3-\frac{14}{3}XY^3+\frac13Z^2-8XYZ-\frac83 X^2Z+\frac43 Y^2Z, 
\end{equation*}
where ${ }^{\prime}=d/d\tau.$ By considering $d/d\tau=2 \pi i q d/dq,$ we obtain the ODE for $R(q)$. 
\end{proof}

\subsection{Note on $E_4$ and $E_6$}

\begin{theorem}
\label{thm-E4-E6-level5-q}
{\it
For $q\in\mathbb{C}$ with $|q|<1,$ 
set 
\begin{align*}
P(q)=&\cot \frac{2 \pi}{5} + 4 \sin \frac{4 \pi}{5} \sum_{n=1}^{\infty} (d_{1,5}(n)-d_{4,5}(n)) q^n
-4\sin \frac{2\pi}{5} \sum_{n=1}^{\infty} (d_{2,5}(n)-d_{3,5}(n)) q^n, \\
Q(q)=&\cot \frac{ \pi}{5} + 4 \sin \frac{2 \pi}{5} \sum_{n=1}^{\infty} (d_{1,5}(n)-d_{4,5}(n)) q^n
+4\sin \frac{4\pi}{5} \sum_{n=1}^{\infty} (d_{2,5}(n)-d_{3,5}(n)) q^n. 
\end{align*}
Then, we have 
\begin{align*}
E_4(q)=&
-5P^4+15P^3Q+155P^2Q^2-15PQ^3-5Q^4, \\
E_6(q)=&
\frac
{
-395P^6-3690P^5Q-8025P^4Q^2-8025P^2Q^4+3690PQ^5-395Q^6
}
{
8
}. 
\end{align*}
}
\end{theorem}

\begin{proof}
For the proof, we use 
\begin{equation*}
\wp=\wp(z;1,\tau)
=
\frac13
\frac
{
\theta^{\prime  \prime  \prime}
\left[
\begin{array}{c}
1 \\
1
\end{array}
\right]
}
{
\theta^{\prime  }
\left[
\begin{array}{c}
1 \\
1
\end{array}
\right]
}
-
\frac{d^2}{dz^2}
\log
\theta
\left[
\begin{array}{c}
1 \\
1
\end{array}
\right](z). 
\end{equation*} 
\par
Note that 
\begin{equation*}
\wp^{\prime \prime}=6\wp^2-g_2, \,\,
g_2(1,\tau)=\frac{4\pi^4}{3}E_4(q).
\end{equation*}
By substituting $z=-2/5,$ we have 
\begin{equation}
\label{eqn:g2-X-Y(1)}
g_2=-\frac{20}{3}X^4+20X^3Y+\frac{620}{3}X^2Y^2-20XY^3-\frac{20}{3}Y^4, 
\end{equation}
which implies 
\begin{equation*}
E_4(q)=-5P^4+15P^3Q+155P^2Q^2-15PQ^3-5Q^4. 
\end{equation*}
\par
Next, note that 
\begin{equation*}
(\wp^{\prime})^2=4\wp^3-g_2\wp-g_3, \,\,
g_3(1,\tau)=\frac{8\pi^6}{27}E_6(q).
\end{equation*}
By substituting $z=-2/5,$ we obtain 
\begin{equation}
\label{eqn:g3-X-Y(1)}
g_3=-\frac{395}{27}X^6-\frac{410}{3}X^5Y-\frac{2675}{9}X^4Y^2
-\frac{2675}{9}X^2Y^4+\frac{410}{3}XY^5-\frac{395}{27}Y^6, 
\end{equation}
which implies 
\begin{equation*}
E_6(q)=\frac
{
-395P^6-3690P^5Q-8025P^4Q^2-8025P^2Q^4+3690PQ^5-395Q^6
}
{
8
}. 
\end{equation*}
\end{proof}

\begin{corollary}
\label{coro-E4-E6-level5-q-(S,T)}
{\it
For $q\in\mathbb{C}$ with $|q|<1,$ 
set 
\begin{align*}
S(q)=&
\frac
{
\sqrt{250-110 \sqrt{5}}
}
{
5
}
\prod_{n=1}^{\infty}
\frac
{
(1-q^n)^5
}
{
(1-q^{5n})^3
}
\left(
1+
\frac{1-\sqrt{5}}{2} 
q^n
+
q^{2n} 
\right)^5  \\ 
=&
\frac
{
\sqrt{250-110 \sqrt{5}}
}
{
5
}
\left\{
1
-
\frac52(1+\sqrt{5})
\sum_{n=1}^{\infty} (d_{1,5}(n)-d_{4,5}(n)) q^n 
+
\frac52
(7+3 \sqrt{5})
\sum_{n=1}^{\infty} (d_{2,5}(n)-d_{3,5}(n)) q^n
\right\},  
\\
T(q)=&
\frac
{
\sqrt{250+110 \sqrt{5}}
}
{
5
}
\prod_{n=1}^{\infty}
\frac
{
(1-q^n)^5
}
{
(1-q^{5n})^3
}
\left(
1+
\frac{1+\sqrt{5}}{2} 
q^n
+
q^{2n} 
\right)^5  \\
=&
\frac
{
\sqrt{250+110 \sqrt{5}}
}
{
5
}
\left\{
1
+
\frac52(-1+\sqrt{5})
\sum_{n=1}^{\infty} (d_{1,5}(n)-d_{4,5}(n)) q^n 
+
\frac52
(7-3 \sqrt{5})
\sum_{n=1}^{\infty} (d_{2,5}(n)-d_{3,5}(n)) q^n
\right\} . 
\end{align*}
Then we have 
\begin{align*}
E_4(q)=&
\frac{S^4+12 S^3 T+14 S^2 T^2-12 S T^3+T^4}{16},  \\
E_6(q)=&
-\frac{1}{64} \left(S^2+T^2\right) \left(S^4+18 S^3 T+74 S^2 T^2-18 S T^3+T^4\right),
\end{align*}
and 
\begin{align*}
q(q:q)_{\infty}^{24}=&
\frac{
(E_4(q))^3-(E_6(q))^2
}{1728}
=
-\frac{S^5 T^5 \left(S^2+11 S T-T^2\right)}{4096}, \\
j(q)=&
-\frac{\left(S^4+12 S^3 T+14 S^2 T^2-12 S T^3+T^4\right)^3}{S^5 T^5 \left(S^2+11 S T-T^2\right)}
=
-\frac{\left(f^4-12 f^3+14 f^2+12 f+1\right)^3}{f^5 \left(-f^2+11 f+1\right)},   \,\,f=T/S.
\end{align*}
}
\end{corollary}

\begin{proof}
The corollary can be obtained by considering 
\begin{equation*}
S=-3 P+Q, \,\,
T= P+3Q, \,\,\mathrm{and} \,\,f=T/S.
\end{equation*}
\end{proof}

\begin{corollary}
\label{coro-E4-E6-level5-q-(V,W)}
{\it
For $q\in\mathbb{C}$ with $|q|<1,$ 
set 
\begin{align*}
V(q)=&\prod_{n=1}^{\infty}
\frac
{
(1-q^n)^5
}
{
(1-q^{5n})^3
}
\left(
1+
\frac{1-\sqrt{5}}{2} 
q^n
+
q^{2n} 
\right)^5   \\
=&
1
-
\frac52(1+\sqrt{5})
\sum_{n=1}^{\infty} (d_{1,5}(n)-d_{4,5}(n)) q^n 
+
\frac52
(7+3 \sqrt{5})
\sum_{n=1}^{\infty} (d_{2,5}(n)-d_{3,5}(n)) q^n,   \\
W(q)=&
\prod_{n=1}^{\infty}
\frac
{
(1-q^n)^5
}
{
(1-q^{5n})^3
}
\left(
1+
\frac{1+\sqrt{5}}{2} 
q^n
+
q^{2n} 
\right)^5  \\
=&
1
+
\frac52(-1+\sqrt{5})
\sum_{n=1}^{\infty} (d_{1,5}(n)-d_{4,5}(n)) q^n 
+
\frac52
(7-3 \sqrt{5})
\sum_{n=1}^{\infty} (d_{2,5}(n)-d_{3,5}(n)) q^n. 
\end{align*}
Then we have 
\begin{align*}
E_4(q)=&
\frac{\left(123-55 \sqrt{5}\right) V^4+12 \left(-11+5 \sqrt{5}\right) V^3 W+28 V^2 W^2-12 \left(11+5 \sqrt{5}\right) V W^3+\left(123+55 \sqrt{5}\right) W^4  }{10},   
 \\
E_6(q)=&
\frac{1}{25} \left(-1525+682 \sqrt{5}\right) V^6-\frac{9}{25} \left(-275+123 \sqrt{5}\right) V^5 W
+\frac{3}{2} \left(-25+11 \sqrt{5}\right) V^4 W^2  \\
&-\frac{3}{2} \left(25+11 \sqrt{5}\right) V^2 W^4
+\frac{9}{25} \left(275+123 \sqrt{5}\right) V W^5-\frac{1}{25}
   \left(1525+682 \sqrt{5}\right) W^6,
\end{align*}
and 
\begin{align*}
q(q;q)_{\infty}^{24}
=&
-\frac{1}{250} V^5 W^5 (V-W) \left(\left(-11+5 \sqrt{5}\right) V+\left(11+5 \sqrt{5}\right) W\right)  \\
=&
\frac{1}{250} \left(11-5 \sqrt{5}\right) V^7 W^5-\frac{11 }{125}V^6 W^6+\frac{1}{250} \left(11+5 \sqrt{5}\right) V^5 W^7. 
\end{align*}
}
\end{corollary}

\begin{theorem}
\label{thm:G4-G6-(1,1/5)-(1,3/5)}
{\it
For every $\tau\in\mathbb{H}^2,$ we have 
\begin{equation*}
G_4(\tau)
=
\frac
{
\theta^{\prime  }
\left[
\begin{array}{c}
1 \\
1
\end{array}
\right]^4
}
{
720\cdot
\theta^{12}
\left[
\begin{array}{c}
 1 \\
\frac15
\end{array}
\right]
\theta^{12}
\left[
\begin{array}{c}
1 \\
\frac35
\end{array}
\right]
}
\left(A^4-12 A^3 B+14 A^2 B^2+12 A B^3+B^4\right), 
\end{equation*}
and
\begin{align*}
G_6(\tau)=&
\frac
{-
\theta^{\prime  }
\left[
\begin{array}{c}
1 \\
1
\end{array}
\right]^6
}
{
30240 \cdot
\theta^{18}
\left[
\begin{array}{c}
 1 \\
\frac15
\end{array}
\right]
\theta^{18}
\left[
\begin{array}{c}
1 \\
\frac35
\end{array}
\right]
}  
\left(A^2+B^2\right) \left(A^4-18 A^3 B+74 A^2 B^2+18 A B^3+B^4\right),
\end{align*}
where 
\begin{equation*}
A=\theta^{5}
\left[
\begin{array}{c}
 1 \\
\frac15
\end{array}
\right], \,\,
B=
\theta^{5}
\left[
\begin{array}{c}
 1 \\
\frac35
\end{array}
\right]. 
\end{equation*}
}
\end{theorem}

\begin{proof}
The theorem is based on equations (\ref{eqn:g2-X-Y(1)}) and (\ref{eqn:g3-X-Y(1)}) and the derivative formulas of Theorem \ref{thm:derivative-level-5-(1,1/5)-(1,3/5)}. 
\end{proof}

\begin{theorem}
\label{thm:D-j-(1,1/5)-(1,3/5)}
{\it
For every $\tau\in\mathbb{H}^2,$ we have 
\begin{equation*}
\Delta=g_2^3-27g_3^2
=
\frac
{
\theta^{\prime  }
\left[
\begin{array}{c}
1 \\
1
\end{array}
\right]^{12}
}
{
\theta^{11}
\left[
\begin{array}{c}
 1 \\
\frac15
\end{array}
\right]
\theta^{11}
\left[
\begin{array}{c}
1 \\
\frac35
\end{array}
\right]
}  
\left( 
\theta^{10}
\left[
\begin{array}{c}
 1 \\
\frac15
\end{array}
\right]
-11
\theta^{5}
\left[
\begin{array}{c}
 1 \\
\frac15
\end{array}
\right]
\theta^{5}
\left[
\begin{array}{c}
1 \\
\frac35
\end{array}
\right]
-
\theta^{10}
\left[
\begin{array}{c}
1 \\
\frac35
\end{array}
\right]
\right),
\end{equation*}
and
\begin{equation*}
j=\frac
{
12^3 g_2^3
}
{
\Delta
}
=
\frac
{
\left(f^4(\tau)-12 f^3(\tau)+14 f^2(\tau) +12f(\tau)+1\right)^3
}
{
f^5(\tau) (f^2(\tau)-11f(\tau)-1)
},
\end{equation*}
where 
\begin{equation*}
f(\tau)=\theta^{5}
\left[
\begin{array}{c}
 1 \\
\frac15
\end{array}
\right]
/
\theta^{5}
\left[
\begin{array}{c}
 1 \\
\frac35
\end{array}
\right]
=\frac{11+5\sqrt{5}}{2}
\prod_{n=1}^{\infty}
\frac{ \left(1+\frac{1+\sqrt{5}}{2}q^n+q^{2n} \right)^5 }{ \left(1+\frac{1-\sqrt{5}}{2}q^n+q^{2n} \right)^5 }, \,\,q=\exp(2\pi i \tau). 
\end{equation*}
}
\end{theorem}

\begin{proof}
By using equations (\ref{eqn:g2-X-Y(1)}) and (\ref{eqn:g3-X-Y(1)}), 
we have 
\begin{align*}
\Delta=&-25 (3 X-Y)^5 (X+3 Y)^5 \left(X^2+4 X Y-Y^2\right)
=
\frac
{
\theta^{\prime  }
\left[
\begin{array}{c}
1 \\
1
\end{array}
\right]^{12}
}
{
\theta^{36}
\left[
\begin{array}{c}
 1 \\
\frac15
\end{array}
\right]
\theta^{36}
\left[
\begin{array}{c}
1 \\
\frac35
\end{array}
\right]
}  
A^5B^5 (A^2-11AB-B^2),  \\
j=&
\frac{320 \left(X^4-3 X^3 Y-31 X^2 Y^2+3 X Y^3+Y^4\right)^3}{27 (3 X-Y)^5 (X+3 Y)^5 \left(X^2+4 X Y-Y^2\right)} 
=
\frac
{
\left(A^4-12 A^3 B+14 A^2 B^2+12 A B^3+B^4\right)^3
}
{
A^5B^5 (A^2-11AB-B^2)
},
\end{align*}
where 
\begin{equation*}
A=\theta^{5}
\left[
\begin{array}{c}
 1 \\
\frac15
\end{array}
\right], \,\,
B=
\theta^{5}
\left[
\begin{array}{c}
 1 \\
\frac35
\end{array}
\right]. 
\end{equation*}
\end{proof}

\begin{corollary}
\label{coro:j-(P,Q)-(1,1/5)-(1,3/5)}
{\it
For $q\in\mathbb{C}$ with $|q|<1,$ 
set 
\begin{align*}
P(q)=&\cot \frac{2 \pi}{5} + 4 \sin \frac{4 \pi}{5} \sum_{n=1}^{\infty} (d_{1,5}(n)-d_{4,5}(n)) q^n
-4\sin \frac{2\pi}{5} \sum_{n=1}^{\infty} (d_{2,5}(n)-d_{3,5}(n)) q^n, \\
Q(q)=&\cot \frac{ \pi}{5} + 4 \sin \frac{2 \pi}{5} \sum_{n=1}^{\infty} (d_{1,5}(n)-d_{4,5}(n)) q^n
+4\sin \frac{4\pi}{5} \sum_{n=1}^{\infty} (d_{2,5}(n)-d_{3,5}(n)) q^n. 
\end{align*}
Then, we have 
\begin{equation*}
j(\tau)=j(q)=
\frac{320 \left(P^4-3 P^3 Q-31 P^2 Q^2+3 P Q^3+Q^4\right)^3}{27 (3 P-Q)^5 (P+3 Q)^5 \left(P^2+4 P Q-Q^2\right)}, 
\,\
\end{equation*}
and
\begin{equation*}
f(\tau)=\theta^{5}
\left[
\begin{array}{c}
 1 \\
\frac15
\end{array}
\right]
/
\theta^{5}
\left[
\begin{array}{c}
 1 \\
\frac35
\end{array}
\right]
=
-
\frac{P+3Q }{3P-Q },  \,\,q=\exp(2\pi i \tau). 
\end{equation*}
}
\end{corollary}

\begin{proof}
The expression of $f(\tau)$ follows from the derivative formulas (\ref{eqn:analogue-Jacobi-(1,1/5)}) and (\ref{eqn:analogue-Jacobi-(1,3/5)}), which implies that 
\begin{equation*}
f(\tau)=-\frac{X+3Y}{3X-Y}. 
\end{equation*}
\end{proof}

\begin{theorem}
\label{thm:another-Jacobi-(1,1/5)-(1,3/5)}
{\it
For every $\tau\in\mathbb{H}^2,$ we have  
\begin{equation}
\label{eqn:Jacobi-(1,1/5)-(1,3/5)}
\theta^{\prime}
\left[
\begin{array}{c}
1 \\
1
\end{array}
\right]^4
=
\frac
{(2 \pi)^4
\theta^{11}
\left[
\begin{array}{c}
1 \\
\frac15
\end{array}
\right]
\theta^{11}
\left[
\begin{array}{c}
1 \\
\frac35
\end{array}
\right]
}
{
\theta^{10}
\left[
\begin{array}{c}
1 \\
\frac15
\end{array}
\right]
-
11
\theta^{5}
\left[
\begin{array}{c}
1 \\
\frac15
\end{array}
\right]
\theta^{5}
\left[
\begin{array}{c}
1 \\
\frac35
\end{array}
\right]
-
\theta^{10}
\left[
\begin{array}{c}
1 \\
\frac35
\end{array}
\right]
}.
\end{equation}
}
\end{theorem}

\begin{proof}
Recall the following formula from Apostol \cite[pp. 51]{Apostol}: 

\begin{equation}
\label{eqn:Delta-eta}
\Delta=(2\pi)^{12} \eta^{24}(\tau)=(2\pi)^4
\theta^{\prime}
\left[
\begin{array}{c}
1 \\
1
\end{array}
\right]^8. 
\end{equation}
The theorem follows from Theorem \ref{thm:D-j-(1,1/5)-(1,3/5)}. 
\end{proof}

\begin{theorem}
\label{thm:G4-G6-modular5-(1)}
{\it
\noindent
\begin{enumerate}\itemsep=0pt
\item[$\mathrm{(1)}$] For each positive integer $k,$ 
the Eisenstein series $G_{4k}(\tau)$ can be expressed using a rational expression as follows: 
\begin{equation*}
\theta
\left[
\begin{array}{c}
1 \\
\frac15
\end{array}
\right](0,\tau), \,\,
\text{and}   \,\,
\theta
\left[
\begin{array}{c}
1 \\
\frac35
\end{array}
\right](0,\tau).
\end{equation*} 
\item[$\mathrm{(2)}$] For each positive integer $k,$ 
the Eisenstein series $G_{4k+2}(\tau)$ can be expressed using a rational expression as follows: 
\begin{equation*}
\theta
\left[
\begin{array}{c}
1 \\
\frac15
\end{array}
\right](0,\tau), \,\,
\theta
\left[
\begin{array}{c}
1 \\
\frac35
\end{array}
\right](0,\tau), \,\,
\text{and} \,\,
\theta^{\prime}
\left[
\begin{array}{c}
1 \\
1
\end{array}
\right](0,\tau).
\end{equation*} 
\end{enumerate}
}
\end{theorem}

\begin{proof}
From the study by Apostol \cite[pp. 118]{Apostol}, 
we recall that for $k=2, 3, 4\ldots,$ 
\begin{equation*}
G_{2k}(\tau)
=
\sum_{a,b} c_{a,b} G_4^4(\tau)G^b_6(\tau), 
\end{equation*}
where $c_{a,b}$ are complex numbers, and the sum is extended over all integers $a\ge 0, b\ge 0$ such that $4a+6b=k.$ 
The theorem follows from Theorems \ref{thm:G4-G6-(1,1/5)-(1,3/5)} and \ref{thm:another-Jacobi-(1,1/5)-(1,3/5)}. 
\end{proof}

\subsection{Proof of Ramanujan's ODEs (\ref{eqn:Ramanujan-ODE})}

\begin{theorem}
{\it
For $q\in\mathbb{C}$ with $|q|<1,$ 
we have 
\begin{equation*}
q\frac{d E_2}{dq}=\frac{(E_2)^2-E_4  }{12  }, \,\,
q\frac{ dE_4}{dq}=\frac{E_2 E_4-E_6  }{3  }, \,\,
q\frac{dE_6}{dq}=\frac{E_2 E_6-(E_4)^2  }{2  }. 
\end{equation*}
}
\end{theorem}

\begin{proof}
According to Theorems \ref{thm-level5-E_2(q)} and \ref{thm-E4-E6-level5-q}, 

\begin{equation*}
q\frac{d}{dq}E_2=q\frac{d}{dq}R=\frac{5P^4-15P^3Q-155P^2Q^2+15PQ^3+5Q^4+R^2}{12}
=\frac{E_2^2-E_4}{12}. 
\end{equation*}
Next, it is observed that 
\begin{align*}
q\frac{d}{dq}E_4=&
q\frac{d}{dq}(-5P^4+15P^3Q+155P^2Q^2-15PQ^3-5Q^4)  \\
=&
\frac{395 P^6}{24}+\frac{615 P^5 Q}{4}+\frac{2675 P^4 Q^2}{8}+\frac{2675 P^2 Q^4}{8}-\frac{615 P Q^5}{4}+\frac{395Q^6}{24}\\
&-\frac{5 P^4 R}{3}+5 P^3 Q R+\frac{155}{3} P^2 Q^2 R
-5 P Q^3 R-\frac{5 Q^4 R}{3}  \\
=&\frac{E_2E_4-E_6}{3}. 
\end{align*}
Finally, we obtain 
\begin{align*}
&q\frac{d}{dq}E_6=
q\frac{d}{dq}
\left\{
\frac
{
-395P^6-3690P^5Q-8025P^4Q^2-8025P^2Q^4+3690PQ^5-395Q^6
}
{
8
}\right\}   \\  
=&
-\frac{25 P^8}{2}+75 P^7 Q+\frac{1325 P^6 Q^2}{2}-2400 P^5 Q^3-\frac{23625 P^4 Q^4}{2}  \\
&+2400 P^3Q^5+\frac{1325 P^2 Q^6}{2}-75 P Q^7-\frac{25 Q^8}{2}\\
&-\frac{395 P^6 R}{16}-\frac{1845}{8} P^5 Q R-\frac{8025}{16} P^4 Q^2 R-\frac{8025}{16} P^2 Q^4 R+\frac{1845}{8} P Q^5 R-\frac{395 Q^6 R}{16}  \\
=&\frac{E_2E_6-(E_4)^2}{2}. 
\end{align*}
\end{proof}

\section{Proof of Theorem \ref{thm-level5-E_2(q^5)}}
\label{sec:proof:level5-E_2(q^5)}

\subsection{Notations}
For every $\tau\in\mathbb{H}^2,$ set $y=\exp(2\pi i \tau/5)$ and 
\begin{equation*}
X=
\frac
{
\theta^{\prime}
\left[
\begin{array}{c}
\frac15 \\
1
\end{array}
\right]
}
{
\theta
\left[
\begin{array}{c}
\frac15 \\
1
\end{array}
\right]
},  \,\,
Y=
\frac
{
\theta^{\prime}
\left[
\begin{array}{c}
\frac35 \\
1
\end{array}
\right]
}
{
\theta
\left[
\begin{array}{c}
\frac35 \\
1
\end{array}
\right]
},  \,\,
Z=
\frac
{
\theta^{\prime  \prime  \prime}
\left[
\begin{array}{c}
1 \\
1
\end{array}
\right]
}
{
\theta^{\prime  }
\left[
\begin{array}{c}
1 \\
1
\end{array}
\right]
}. 
\end{equation*}

\subsection{ ODEs for $P(q)$ and $Q(q)$}

\begin{proof}
According to Proposition \ref{prop:2nd-derivative-(1/5,1)-(3/5,1)}, we have 
\begin{equation}
\frac{
\theta^{\prime \prime}
\left[
\begin{array}{c}
\frac15 \\
1
\end{array}
\right]  
}
{ 
\theta
\left[
\begin{array}{c}
\frac15\\
1
\end{array}
\right]
}
=
\frac13 Z
-4XY-\frac43X^2+\frac23Y^2,   \,\,
\frac{
\theta^{\prime \prime}
\left[
\begin{array}{c}
\frac35 \\
1
\end{array}
\right]  
}
{ 
\theta
\left[
\begin{array}{c}
\frac35 \\
1
\end{array}
\right]
}
=
\frac13 Z
+4XY+\frac23X^2-\frac43Y^2.  
\end{equation}
According to Proposition \ref{prop:3rd-derivative-(1/5,1)-(3/5,1)}, we obtain
\begin{equation}
\frac{
\theta^{\prime \prime \prime}
\left[
\begin{array}{c}
\frac15\\
1
\end{array}
\right]  
}
{ 
\theta
\left[
\begin{array}{c}
\frac15 \\
1
\end{array}
\right]
}
=
3X^3+9X^2Y-15XY^2+3Y^3+XZ,   \,\,
\frac{
\theta^{\prime \prime \prime}
\left[
\begin{array}{c}
\frac35\\
1
\end{array}
\right]  
}
{ 
\theta
\left[
\begin{array}{c}
\frac35 \\
1
\end{array}
\right]
}
=
-3X^3-15X^2Y-9XY^2+3Y^3+YZ.  
\end{equation}
\par
Because 
\begin{equation*}
\frac{
\theta^{\prime \prime \prime}
\left[
\begin{array}{c}
\frac{j}{5} \\
1
\end{array}
\right]  
}
{ 
\theta
\left[
\begin{array}{c}
\frac{j}{5} \\
1
\end{array}
\right]
}
=
4 \pi i
\frac{d}{d\tau}
\left\{
\frac{
\theta^{\prime }
\left[
\begin{array}{c}
\frac{j}{5} \\
1
\end{array}
\right]  
}
{ 
\theta
\left[
\begin{array}{c}
\frac{j}{5} \\
1
\end{array}
\right]
}
\right\}
+
\frac{
\theta^{\prime }
\left[
\begin{array}{c}
\frac{j}{5} \\
1
\end{array}
\right]  
}
{ 
\theta
\left[
\begin{array}{c}
\frac{j}{5} \\
1
\end{array}
\right]
}
\cdot
\frac{
\theta^{\prime \prime }
\left[
\begin{array}{c}
\frac{j}{5}\\
1
\end{array}
\right]  
}
{ 
\theta
\left[
\begin{array}{c}
\frac{j}{5}\\
1
\end{array}
\right]
},  \,\,(j=1,3), 
\end{equation*}
we get 
\begin{align*}
4\pi i X^{\prime}=&
\frac{13}{3}X^3+13X^2Y-\frac{47}{3}XY^2+3Y^3+\frac23XZ,  \\
4\pi i Y^{\prime}=&
-3X^3-\frac{47}{3}X^2Y-13XY^2+\frac{13}{3}Y^3+\frac23YZ, 
\end{align*}
where ${ }^{\prime}=d/d\tau. $ 
Thus, by changing $\tau\rightarrow 5\tau,$ we obtain the ODEs for $P(q)$ and $Q(q)$. 
\end{proof}

\subsection{ODE for $R(q)$}

\begin{proof}
First, note that
\begin{align*}
\frac{
\theta^{(4)}
\left[
\begin{array}{c}
\frac15\\
1
\end{array}
\right]  
}
{ 
\theta
\left[
\begin{array}{c}
\frac15 \\
1
\end{array}
\right]
}
=&
4 \pi i
\frac{d}{d\tau}
\left\{
\frac{
\theta^{\prime \prime }
\left[
\begin{array}{c}
\frac15 \\
1
\end{array}
\right]  
}
{ 
\theta
\left[
\begin{array}{c}
\frac15 \\
1
\end{array}
\right]
}
\right\}
+
\left\{
\frac{
\theta^{\prime \prime}
\left[
\begin{array}{c}
\frac15 \\
1
\end{array}
\right]  
}
{ 
\theta
\left[
\begin{array}{c}
\frac15 \\
1
\end{array}
\right]
}
\right\}^2,   \\
=&
\frac{4\pi i}{3}\frac{dZ}{d\tau}
+\frac{20}{9}X^4+\frac{52}{3}X^3Y+\frac{316}{9}X^2Y^2+\frac{44}{3}XY^3-\frac{52}{9}Y^4  \\
&\quad -8XYZ-\frac83X^2Z+\frac43Y^2Z+\frac19Z^2,  
\end{align*}
and 
\begin{align*}
\frac{
\theta^{(5)}
\left[
\begin{array}{c}
1 \\
1
\end{array}
\right]  
}
{ 
\theta^{(1)}
\left[
\begin{array}{c}
1 \\
1
\end{array}
\right]
}
=&
4 \pi i
\frac{d}{d\tau}
\left\{
\frac{
\theta^{(3) }
\left[
\begin{array}{c}
1 \\
1
\end{array}
\right]  
}
{ 
\theta^{(1)}
\left[
\begin{array}{c}
1 \\
1
\end{array}
\right]
}
\right\}
+
\left\{
\frac{
\theta^{(3)}
\left[
\begin{array}{c}
1 \\
1
\end{array}
\right]  
}
{ 
\theta^{(1)}
\left[
\begin{array}{c}
1 \\
1
\end{array}
\right]
}
\right\}^2   
=
4\pi i \frac{dZ}{d\tau}+Z^2.
\end{align*}
Thus, according to Proposition \ref{prop:4th-derivative-(1/5,1)-(3/5,1)}, 
we have 
\begin{equation*}
4\pi i Z^{\prime}=
\frac{10}{3}X^4-10X^3Y-\frac{310}{3}X^2Y^2+10XY^3+\frac{10}{3}Y^4+\frac23 Z^2,
\end{equation*}
and
\begin{equation*}
\frac{
\theta^{(4)}
\left[
\begin{array}{c}
\frac15 \\
1
\end{array}
\right]  
}
{ 
\theta
\left[
\begin{array}{c}
\frac15\\
1
\end{array}
\right]
}
=
\frac{10}{3}X^4+14X^3Y+\frac23 X^2Y^2+18XY^3-\frac{14}{3}XY^3+\frac13Z^2-8XYZ-\frac83 X^2Z+\frac43 Y^2Z, 
\end{equation*}
where ${ }^{\prime}=d/d\tau.$ 
Thus, by changing $\tau\rightarrow 5\tau,$ we obtain the ODE for $R(q)$.
\end{proof}

\begin{theorem}
\label{thm-E4-E6-level5-q^5}
{\it
For $q\in\mathbb{C}$ with $|q|<1,$ 
set 
\begin{equation*}
P(q)=1+ 10 \sum_{n=1}^{\infty} (d_{2,5}(n)-d_{3,5}(n)) q^n, \,\,
Q(q)=3 + 10\sum_{n=1}^{\infty} (d_{1,5}(n)-d_{4,5}(n)) q^n. 
\end{equation*}
Then, we have 
\begin{align*}
E_4(q^5)=&
\frac{
-P^4+3P^3Q+31P^2Q^2-3PQ^3-Q^4
}
{125}
, \\
E_6(q^5)=&
\frac
{
79P^6+738P^5Q+1605P^4Q^2+1605P^2Q^4-738PQ^5+79Q^6
}
{
25000
}. 
\end{align*}
}
\end{theorem}

\begin{proof}
For the proof, we use 
\begin{equation*}
\wp=\wp(z;1,\tau)
=
\frac13
\frac
{
\theta^{\prime  \prime  \prime}
\left[
\begin{array}{c}
1 \\
1
\end{array}
\right]
}
{
\theta^{\prime  }
\left[
\begin{array}{c}
1 \\
1
\end{array}
\right]
}
-
\frac{d^2}{dz^2}
\log
\theta
\left[
\begin{array}{c}
1 \\
1
\end{array}
\right](z). 
\end{equation*} 
\par
First, note that 
\begin{equation*}
\wp^{\prime \prime}=6\wp^2-g_2, \,\,
g_2(1,\tau)=\frac{4\pi^4}{3}E_4(q).
\end{equation*}
Then, by substituting $z=-2\tau/5,$ we have 
\begin{equation}
\label{eqn:g2-X-Y(2)}
g_2=-\frac{20}{3}X^4+20X^3Y+\frac{620}{3}X^2Y^2-20XY^3-\frac{20}{3}Y^4, 
\end{equation}
which implies 
\begin{equation*}
E_4(q^5)=\frac{
-P^4+3P^3Q+31P^2Q^2-3PQ^3-Q^4
}
{125}. 
\end{equation*}
\par
Next, it is observed that 
\begin{equation*}
(\wp^{\prime})^2=4\wp^3-g_2\wp-g_3, \,\,
g_3(1,\tau)=\frac{8\pi^6}{27}E_6(q).
\end{equation*}
Thus, by substituting $z=-2\tau/5,$ we obtain 
\begin{equation}
\label{eqn:g3-X-Y(2)}
g_3=-\frac{395}{27}X^6-\frac{410}{3}X^5Y-\frac{2675}{9}X^4Y^2
-\frac{2675}{9}X^2Y^4+\frac{410}{3}XY^5-\frac{395}{27}Y^6, 
\end{equation}
which implies 
\begin{equation*}
E_6(q^5)=\frac
{
79P^6+738P^5Q+1605P^4Q^2+1605P^2Q^4-738PQ^5+79Q^6
}
{
25000
}. 
\end{equation*}
\end{proof}

\begin{corollary}
\label{coro-E4-E6-level5-q^5-(S,T)}
{\it
For $q\in\mathbb{C}$ with $|q|<1,$ 
set 
\begin{align*}
S(q)=&
q
\prod_{n=1}^{\infty} 
\frac
{
(1-q^n)^2
}
{
(1-q^{5n-2})^5(1-q^{5n-3})^5    
}
=
\sum_{n=1}^{\infty} (d_{1,5}(n)-d_{4,5}(n)) q^n
-3
\sum_{n=1}^{\infty} (d_{2,5}(n)-d_{3,5}(n)) q^n,  
\end{align*}
and 
\begin{align*}
T(q)=&
\prod_{n=1}^{\infty} 
\frac
{
(1-q^n)^2
}
{
(1-q^{5n-1})^5(1-q^{5n-4})^5    
}
=
1+
3
\sum_{n=1}^{\infty} (d_{1,5}(n)-d_{4,5}(n)) q^n
+
\sum_{n=1}^{\infty} (d_{2,5}(n)-d_{3,5}(n)) q^n.
\end{align*}
Then, we have 
\begin{align*}
E_4(q^5)=&
S^4+12 S^3 T+14 S^2 T^2-12 S T^3+T^4,  \\
E_6(q^5)=&
\left(S^2+T^2\right) \left(S^4+18 S^3 T+74 S^2 T^2-18 S T^3+T^4\right),
\end{align*}
and
\begin{align*}
q^5(q^5;q^5)_{\infty}=&-S^5 T^5 \left(S^2+11 S T-T^2\right)  \\
j(q^5)=&
-\frac{\left(S^4+12 S^3 T+14 S^2 T^2-12 S T^3+T^4\right)^3}{S^5 T^5 \left(S^2+11 S T-T^2\right)} 
=
-\frac{\left(g^4+12 g^3+14 g^2-12 g+1\right)^3}{g^5 \left(g^2+11 g-1\right)},  \,\,
g=S/T. 
\end{align*}
}
\end{corollary}

\begin{proof}
The corollary can be obtained by considering 
\begin{equation*}
S=-\frac{3}{10}P+\frac{1}{10} Q, \,\,
T=\frac{1}{10} P+\frac{3}{10} Q. 
\end{equation*}
\end{proof}

\begin{theorem}
\label{thm:G4-G6-(1/5,1)-(3/5,1)}
{\it
For every $\tau\in\mathbb{H}^2,$ we have 
\begin{equation*}
G_4(\tau)
=
\frac
{
\zeta_5^2
\theta^{\prime  }
\left[
\begin{array}{c}
1 \\
1
\end{array}
\right]^4
}
{
720\cdot
\theta^{12}
\left[
\begin{array}{c}
\frac15 \\
1
\end{array}
\right]
\theta^{12}
\left[
\begin{array}{c}
\frac35 \\
1
\end{array}
\right]
}
 \left(A^4+12 A^3 B+14 A^2 B^2-12 A B^3+B^4\right), 
\end{equation*}
and
\begin{align*}
G_6(\tau)=&
\frac
{-\zeta_5^3
\theta^{\prime  }
\left[
\begin{array}{c}
1 \\
1
\end{array}
\right]^6
}
{
30240 \cdot
\theta^{18}
\left[
\begin{array}{c}
 \frac15 \\
1
\end{array}
\right]
\theta^{18}
\left[
\begin{array}{c}
\frac35 \\
1
\end{array}
\right]
}  
 \left(A^2+B^2\right) \left(A^4+18 A^3 B+74 A^2 B^2-18 A B^3+B^4\right),
\end{align*}
where $\zeta_5=\exp(2\pi i/5),$ and 
\begin{equation*}
A=\theta^{5}
\left[
\begin{array}{c}
\frac15\\
1
\end{array}
\right], \,\,
B=
\theta^{5}
\left[
\begin{array}{c}
\frac35 \\
1
\end{array}
\right]. 
\end{equation*}
}
\end{theorem}

\begin{proof}
The theorem follows from equations (\ref{eqn:g2-X-Y(2)}) and (\ref{eqn:g3-X-Y(2)}) and the derivative formulas of Theorem \ref{thm:derivative-level-5-(1/5,1)-(3/5,1)}. 
\end{proof}

\begin{theorem}
\label{thm:D-j-(1/5,1)-(3/5,1)}
{\it
For every $\tau\in\mathbb{H}^2,$ we have 
\begin{equation*}
\Delta(\tau)=g_2^3-27g_3^2
=
\frac
{\zeta_5
\theta^{\prime  }
\left[
\begin{array}{c}
1 \\
1
\end{array}
\right]^{12}
}
{
\theta^{11}
\left[
\begin{array}{c}
\frac15 \\
1
\end{array}
\right]
\theta^{11}
\left[
\begin{array}{c}
\frac35 \\
1
\end{array}
\right]
}  
\left(
\theta^{10}
\left[
\begin{array}{c}
\frac35 \\
1
\end{array}
\right]
-
11
\theta^{5}
\left[
\begin{array}{c}
\frac35 \\
1
\end{array}
\right]
\theta^{5}
\left[
\begin{array}{c}
\frac15 \\
1
\end{array}
\right]
-
\theta^{10}
\left[
\begin{array}{c}
\frac15 \\
1
\end{array}
\right]
\right),
\end{equation*}
and
\begin{equation*}
j(5\tau)=\frac
{
12^3 g_2(5\tau)^3
}
{
\Delta(5\tau)
}
=
\frac
{
\left(g^4(\tau)-12 g^3(\tau)+14 g^2(\tau) +12 g(\tau)+1\right)^3
}
{
g^5(\tau) (g^2(\tau)-11g(\tau)-1)
},
\end{equation*}
where $\zeta_5=\exp(2\pi i/5),$ 
\begin{equation*}
g(\tau)
=
\theta^{5}
\left[
\begin{array}{c}
\frac35 \\
1
\end{array}
\right](0,5\tau)
/
\theta^{5}
\left[
\begin{array}{c}
\frac15 \\
1
\end{array}
\right](0,5\tau)
=
-
q
\prod_{n=1}^{\infty}\frac{(1-q^{5n-1})^5(1-q^{5n-4})^5  }{(1-q^{5n-2})^5(1-q^{5n-3})^5  }, \,\,
q=\exp(2 \pi i \tau). 
\end{equation*}
}
\end{theorem}

\begin{proof}
Based on equations (\ref{eqn:g2-X-Y(2)}) and (\ref{eqn:g3-X-Y(2)}), 
we have 
\begin{align*}
\Delta=&-25 (3 X-Y)^5 (X+3 Y)^5 \left(X^2+4 X Y-Y^2\right)
=
\frac
{-\zeta_5
\theta^{\prime  }
\left[
\begin{array}{c}
1 \\
1
\end{array}
\right]^{12}
}
{
\theta^{36}
\left[
\begin{array}{c}
\frac15 \\
1
\end{array}
\right]
\theta^{36}
\left[
\begin{array}{c}
\frac35 \\
1
\end{array}
\right]
}  
A^5B^5 (A^2+11AB-B^2),  \\
j=&
\frac{320 \left(X^4-3 X^3 Y-31 X^2 Y^2+3 X Y^3+Y^4\right)^3}{27 (3 X-Y)^5 (X+3 Y)^5 \left(X^2+4 X Y-Y^2\right)}
=
-
\frac
{
\left(A^4+12 A^3 B+14 A^2 B^2-12 A B^3+B^4\right)^3
}
{
A^5B^5 (A^2+11AB-B^2)
},
\end{align*}
where 
\begin{equation*}
A=\theta^{5}
\left[
\begin{array}{c}
\frac15 \\
1
\end{array}
\right], \,\,
B=
\theta^{5}
\left[
\begin{array}{c}
\frac35 \\
1
\end{array}
\right]. 
\end{equation*}
\end{proof}

\begin{corollary}
\label{coro:j-(P,Q)-(1/5,1)-(3/5,1)}
{\it
For $q\in\mathbb{C}$ with $|q|<1,$ 
set 
\begin{equation*}
P(q)=1+ 10 \sum_{n=1}^{\infty} (d_{2,5}(n)-d_{3,5}(n)) q^n, \,\,
Q(q)=3 + 10\sum_{n=1}^{\infty} (d_{1,5}(n)-d_{4,5}(n)) q^n. 
\end{equation*}
Then, we have 
\begin{equation*}
j(5\tau)=j(q^5)=
\frac{320 \left(P^4-3 P^3 Q-31 P^2 Q^2+3 P Q^3+Q^4\right)^3}{27 (3 P-Q)^5 (P+3 Q)^5 \left(P^2+4 P Q-Q^2\right)}, 
\end{equation*}
and
\begin{equation*}
g(\tau)
=
\theta^{5}
\left[
\begin{array}{c}
\frac35 \\
1
\end{array}
\right](0,5\tau)
/
\theta^{5}
\left[
\begin{array}{c}
\frac15 \\
1
\end{array}
\right](0,5\tau)
=
\frac{3P-Q}{P+3Q}, \,\,  q=\exp(2\pi i \tau). 
\end{equation*}
}
\end{corollary}

\begin{proof}
The expression of $g(\tau)$ follows from the derivative formulas (\ref{eqn:analogue-Jacobi-(1/5,1)}) and (\ref{eqn:analogue-Jacobi-(3/5,1)}), 
which imply that 
\begin{equation*}
g(\tau/5)=\frac{3X-Y}{X+3Y}.
\end{equation*}
\end{proof}

\begin{theorem}
\label{thm:f-g-relation}
{\it
For every $\tau\in\mathbb{H}^2,$ 
set 
\begin{equation*}
f(\tau)=
\frac{
\theta^{5}
\left[
\begin{array}{c}
 1 \\
\frac15
\end{array}
\right](0,\tau)
}
{
\theta^{5}
\left[
\begin{array}{c}
 1 \\
\frac35
\end{array}
\right](0,\tau)
}
=\frac{11+5\sqrt{5}}{2}
\prod_{n=1}^{\infty}
\frac{ \left(1+\frac{1+\sqrt{5}}{2}q^n+q^{2n} \right)^5 }{ \left(1+\frac{1-\sqrt{5}}{2}q^n+q^{2n} \right)^5 },
\end{equation*}
and
\begin{equation*}
g(\tau)
=
\frac
{
\theta^{5}
\left[
\begin{array}{c}
\frac35 \\
1
\end{array}
\right](0,5\tau)
}
{
\theta^{5}
\left[
\begin{array}{c}
\frac15 \\
1
\end{array}
\right](0,5\tau)
}
=
-
q
\prod_{n=1}^{\infty}\frac{(1-q^{5n-1})^5(1-q^{5n-4})^5  }{(1-q^{5n-2})^5(1-q^{5n-3})^5  }, \,\,  q=\exp(2\pi i \tau). 
\end{equation*}
Then, we have 
\begin{equation}
\label{eqn:f-g-relation}
g(\tau)=
\frac
{
\frac12 (-11+5\sqrt{5})f(\tau)-1
}
{
-f(\tau)-\frac12(-11+5\sqrt{5})
}. 
\end{equation}
}
\end{theorem}

\begin{proof}
We first set 
\begin{equation*}
P_2(q):=1+ 10 \sum_{n=1}^{\infty} (d_{2,5}(n)-d_{3,5}(n)) q^n, \,\,
Q_2(q):=3 + 10\sum_{n=1}^{\infty} (d_{1,5}(n)-d_{4,5}(n)) q^n,
\end{equation*}
which implies that 
\begin{align*}
P_1(q):=&\cot \frac{2 \pi}{5} + 4 \sin \frac{4 \pi}{5} \sum_{n=1}^{\infty} (d_{1,5}(n)-d_{4,5}(n)) q^n
-4\sin \frac{2\pi}{5} \sum_{n=1}^{\infty} (d_{2,5}(n)-d_{3,5}(n)) q^n, \\
=&
-\frac25
\sin \frac{2\pi}{5} P_2(q)
+
\frac25  \sin \frac{4\pi}{5} Q_2(q),  \\
Q_1(q):=&\cot \frac{ \pi}{5} + 4 \sin \frac{2 \pi}{5} \sum_{n=1}^{\infty} (d_{1,5}(n)-d_{4,5}(n)) q^n
+4\sin \frac{4\pi}{5} \sum_{n=1}^{\infty} (d_{2,5}(n)-d_{3,5}(n)) q^n, \\ 
=&
\frac25
\sin \frac{4\pi}{5} P_2(q)
+
\frac25  \sin \frac{2\pi}{5} Q_2(q).
\end{align*}
The theorem is derived from Corollaries \ref{coro:j-(P,Q)-(1,1/5)-(1,3/5)} and \ref{coro:j-(P,Q)-(1/5,1)-(3/5,1)}. 
\end{proof}

\begin{theorem}
\label{eqn:another-Jacobi-(1/5,1)-(3/5,1)}
{\it 
For every $\tau\in\mathbb{H}^2,$ we have 
\begin{equation}
\label{eqn:Jacobi-(1/5,1)-(3/5,1)}
\theta^{\prime}
\left[
\begin{array}{c}
1 \\
1
\end{array}
\right]^4
=
\frac
{-(2\pi)^4
\zeta_5^4
\theta^{11}
\left[
\begin{array}{c}
\frac15 \\
1
\end{array}
\right]
\theta^{11}
\left[
\begin{array}{c}
\frac35 \\
1
\end{array}
\right]
}
{
\theta^{10}
\left[
\begin{array}{c}
\frac15 \\
1
\end{array}
\right]
+
11
\theta^{5}
\left[
\begin{array}{c}
\frac15 \\
1
\end{array}
\right]
\theta^{5}
\left[
\begin{array}{c}
\frac35 \\
1
\end{array}
\right]
-
\theta^{10}
\left[
\begin{array}{c}
\frac35 \\
1
\end{array}
\right]
}, 
\end{equation}
where $\zeta_5=\exp(2 \pi i /5).$ 
}
\end{theorem}

\begin{proof}
The theorem is derived from formula (\ref{eqn:Delta-eta}) and Theorem \ref{thm:D-j-(1/5,1)-(3/5,1)}. 
\end{proof}

\begin{theorem}
\label{thm:G4-G6-modular5-(2)}
{\it
\noindent
\begin{enumerate}\itemsep=0pt
\item[$\mathrm{(1)}$] For each positive integer $k,$ 
the Eisenstein series $G_{4k}(\tau)$ can be expressed using a rational expression 
\begin{equation*}
\theta
\left[
\begin{array}{c}
\frac15 \\
1
\end{array}
\right](0,\tau), \,\,
\text{and}   \,\,
\theta
\left[
\begin{array}{c}
\frac35 \\
1
\end{array}
\right](0,\tau).
\end{equation*} 
\item[$\mathrm{(2)}$] For each positive integer $k,$ 
the Eisenstein series $G_{4k+2}(\tau)$ can be expressed using a rational expression 
\begin{equation*}
\theta
\left[
\begin{array}{c}
\frac15 \\
1
\end{array}
\right](0,\tau), \,\,
\theta
\left[
\begin{array}{c}
\frac35 \\
1
\end{array}
\right](0,\tau), \,\,
\text{and} \,\,
\theta^{\prime}
\left[
\begin{array}{c}
1 \\
1
\end{array}
\right](0,\tau).
\end{equation*} 
\end{enumerate}
}
\end{theorem}

\begin{proof}
The theorem can be proved in the same way as Theorem \ref{thm:G4-G6-modular5-(1)}
\end{proof}

\section{Some product-series identities}
\label{sec:product-series-level5}

\subsection{Some theta functional formulas}

\begin{proposition}
\label{prop:theta-function-identities-level5}
{\it
For every $(z,\tau)\in\mathbb{C}\times\mathbb{H}^2,$ we have 
\begin{align}
&
\theta^2
\left[
\begin{array}{c}
1 \\
\frac35
\end{array}
\right]
\theta
\left[
\begin{array}{c}
1 \\
\frac15
\end{array}
\right](z)
\theta
\left[
\begin{array}{c}
1 \\
\frac95
\end{array}
\right](z)
-
\theta^2
\left[
\begin{array}{c}
1 \\
\frac15
\end{array}
\right]
\theta
\left[
\begin{array}{c}
1 \\
\frac35
\end{array}
\right](z)
\theta
\left[
\begin{array}{c}
1\\
\frac75
\end{array}
\right](z) \notag \\
&\hspace{20mm}
+
\theta
\left[
\begin{array}{c}
1 \\
\frac15
\end{array}
\right]
\theta
\left[
\begin{array}{c}
1 \\
\frac35
\end{array}
\right]
\theta^2 
\left[
\begin{array}{c}
1 \\
1
\end{array}
\right](z)=0,   \label{eqn:level5-(1,1/5),(1,3/5)}
\end{align}
\begin{align}
&
-
\zeta_5^2
\theta^2
\left[
\begin{array}{c}
\frac35 \\
1
\end{array}
\right]
\theta
\left[
\begin{array}{c}
\frac15 \\
1
\end{array}
\right](z)
\theta
\left[
\begin{array}{c}
\frac95\\
1
\end{array}
\right](z)
+
\zeta_5^3
\theta^2
\left[
\begin{array}{c}
\frac15 \\
1
\end{array}
\right]
\theta
\left[
\begin{array}{c}
\frac35 \\
1
\end{array}
\right](z)
\theta
\left[
\begin{array}{c}
\frac75 \\
1
\end{array}
\right](z)  \notag \\
&\hspace{20mm}
+
\theta
\left[
\begin{array}{c}
\frac15 \\
1
\end{array}
\right]
\theta
\left[
\begin{array}{c}
\frac35 \\
1
\end{array}
\right]
\theta^2 
\left[
\begin{array}{c}
1 \\
1
\end{array}
\right](z)=0,   \label{eqn:level5-(1/5,1),(3/5,1)}
\end{align}
where $\zeta_5=\exp(2\pi i/5).$ 
}
\end{proposition}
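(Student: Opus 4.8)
The plan is to regard the left-hand side of each identity as a function of $z$ with $\tau$ held fixed, and to exploit the fact that it lives in a very small space of theta-type functions. First I would read off from the quasi-periodicity relation \eqref{eqn:integer-char} how each of the three products behaves under $z\mapsto z+1$ and $z\mapsto z+\tau$. For (3.1) the three products are $\theta[(1,\tfrac15)](z)\,\theta[(1,\tfrac95)](z)$, $\theta[(1,\tfrac35)](z)\,\theta[(1,\tfrac75)](z)$, and $\theta[(1,1)]^2(z)$, and the decisive observation is that in each pair the two lower characteristics sum to $2$. Consequently, by \eqref{eqn:integer-char}, all three acquire the multiplier $1$ under $z\mapsto z+1$ and the same multiplier $\mathrm{const}\cdot\exp(-4\pi i z)$ under $z\mapsto z+\tau$. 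Hence all three lie in one complex vector space $V$ of entire functions $f$ with $f(z+1)=f(z)$ and $f(z+\tau)=\mathrm{const}\cdot e^{-4\pi i z}f(z)$.

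By the standard argument-principle count — the case recalled in Section~2, that a single such theta function has one zero, located at $\zeta=\tfrac{1-\epsilon}{2}\tau+\tfrac{1-\epsilon'}{2}$, here applied with the doubled multiplier — every nonzero element of $V$ has exactly two zeros in the fundamental parallelogram, so $\dim V=2$. Writing $F(z)$ for the entire left-hand side of (3.1), we have $F\in V$, and it therefore suffices to prove that $F$ has more than two zeros counted with multiplicity; that forces $F\equiv0$. All the zeros I would exhibit sit at $z=0$. I would first check that $F$ is \emph{even} in $z$: using the reflection formula $\theta[(-\epsilon,-\epsilon')](\zeta)=\theta[(\epsilon,\epsilon')](-\zeta)$ together with the characteristic relation (2.2), the two factors of each pair are interchanged up to sign under $z\mapsto -z$, and the signs cancel within each product. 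Next, $F(0)=0$: the term $\theta[(1,1)]^2(z)$ vanishes at $z=0$ since $\theta[(1,1)]$ has its zero at the origin, while (2.2) yields the constant identities $\theta[(1,\tfrac95)]=-\theta[(1,\tfrac15)]$ and $\theta[(1,\tfrac75)]=-\theta[(1,\tfrac35)]$, which make the first two terms cancel precisely because the stated coefficients are $\theta[(1,\tfrac35)]^2$ and $-\theta[(1,\tfrac15)]^2$. Being even, $F$ has $F'(0)=0$ automatically, so $z=0$ is already a double zero.

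To finish I would push the vanishing one order further by computing $F''(0)$. Differentiating each product twice and using $\theta[(1,\tfrac95)](z)=-\theta[(1,\tfrac15)](-z)$ and its analogue reduces the equation $F''(0)=0$ to a single relation among the theta constants $\theta[(1,\tfrac15)]$, $\theta[(1,\tfrac35)]$, their first derivatives, and the second derivatives $\theta''[(1,\cdot)]$ and $\theta'[(1,1)]$; the heat equation \eqref{eqn:heat} then lets me rewrite each second $\zeta$-derivative as a $\tau$-derivative and so verify the relation. Since $F$ is even with $F(0)=F''(0)=0$, it vanishes to order at least four at the origin, which is incompatible with a nonzero element of the two-dimensional space $V$; hence $F\equiv0$, proving (3.1). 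The identity (3.2) follows by the same argument with the two characteristics exchanged, the only new feature being the powers of $\zeta_5=\exp(2\pi i/5)$, which are exactly the multipliers produced by \eqref{eqn:integer-char} and (2.2) when the shift acts on the first characteristic rather than the second.

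The step I expect to be the genuine obstacle is the order-two computation $F''(0)=0$. This is where the specific numerical coefficients in the statement are actually forced, and it demands the heat equation together with the precise values of the relevant theta constants and their first and second derivatives, rather than any soft dimension count; everything preceding it is bookkeeping with \eqref{eqn:integer-char}, (2.2), and the reflection formula.
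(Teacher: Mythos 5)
Your overall framework---placing the three products in a two\-dimensional space of theta\-type functions and then exhibiting more zeros than a nonzero element of that space can have---is the same zero\-counting idea as the paper's, but the way you produce the zeros leaves a genuine gap. The bookkeeping part (evenness of $F$, $F(0)=0$, hence $F^{\prime}(0)=0$) is fine and gives a double zero at the origin, which only matches the allowed count. Your proof therefore hinges entirely on $F^{\prime\prime}(0)=0$, and this is not a step that can be closed by ``the heat equation together with the precise values of the theta constants and their derivatives.'' Writing out $F^{\prime\prime}(0)=0$ and dividing by $\theta^2\left[\begin{smallmatrix}1\\ \frac15\end{smallmatrix}\right]\theta^2\left[\begin{smallmatrix}1\\ \frac35\end{smallmatrix}\right]$ yields exactly
\begin{equation*}
\frac{\theta^{\prime}\left[\begin{smallmatrix}1\\ 1\end{smallmatrix}\right]^2}
{\theta\left[\begin{smallmatrix}1\\ \frac15\end{smallmatrix}\right]\,\theta\left[\begin{smallmatrix}1\\ \frac35\end{smallmatrix}\right]}
=\left.\frac{d^2}{dz^2}\log\theta\left[\begin{smallmatrix}1\\ \frac15\end{smallmatrix}\right](z)\right|_{z=0}
-\left.\frac{d^2}{dz^2}\log\theta\left[\begin{smallmatrix}1\\ \frac35\end{smallmatrix}\right](z)\right|_{z=0},
\end{equation*}
which is precisely the relation the paper extracts \emph{from} this proposition (by comparing $z^2$\-coefficients) in order to prove the product--series identity (1.1). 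After applying the heat equation and the triple product, your missing step is equivalent to the Fourier\-expansion identity $\eta^5(\tau)/\eta(5\tau)=1-5\sum_{n\ge 1}\bigl(\sum_{d|n}d\left(\tfrac d5\right)\bigr)q^n$ itself: that is the hard analytic content of Section 3, not bookkeeping, and within this paper's logical structure it is a \emph{consequence} of the proposition. So your plan is either circular (if you invoke (1.1)) or incomplete (you flag the computation as ``the genuine obstacle'' but do not carry it out, and no amount of rewriting second $\zeta$\-derivatives as $\tau$\-derivatives verifies it by itself).

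The repair is to avoid derivatives altogether and spend your two\-zero bound at distinct points. Evaluate $F$ at $z=\tfrac15$ and $z=\tfrac25$, the zeros of $\theta\left[\begin{smallmatrix}1\\ \frac35\end{smallmatrix}\right](z)$ and $\theta\left[\begin{smallmatrix}1\\ \frac15\end{smallmatrix}\right](z)$. The shift identity $\theta\left[\begin{smallmatrix}1\\ \epsilon^{\prime}\end{smallmatrix}\right](z+a)=\theta\left[\begin{smallmatrix}1\\ \epsilon^{\prime}+2a\end{smallmatrix}\right](z)$, together with (2.2) and the reflection formula, gives for instance $\theta\left[\begin{smallmatrix}1\\ \frac15\end{smallmatrix}\right](\tfrac15)=\theta\left[\begin{smallmatrix}1\\ \frac35\end{smallmatrix}\right]$, $\theta\left[\begin{smallmatrix}1\\ \frac95\end{smallmatrix}\right](\tfrac15)=-\theta\left[\begin{smallmatrix}1\\ \frac15\end{smallmatrix}\right]$, $\theta\left[\begin{smallmatrix}1\\ 1\end{smallmatrix}\right](\tfrac15)=-\theta\left[\begin{smallmatrix}1\\ \frac35\end{smallmatrix}\right]$, whence $F(\tfrac15)=0$, and similarly $F(\tfrac25)=0$. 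Combined with $F(0)=0$ this gives three distinct zeros in the fundamental parallelogram, forcing $F\equiv 0$. This is in substance the paper's own argument run in reverse: the paper posits an unknown dependence $x_1,x_2,x_3$ (legitimate since the space is two\-dimensional), evaluates at $z=\tfrac25,\tfrac15,0$, and solves; the coefficients in the statement are exactly the resulting solution, so with them in hand the three evaluations are all one needs.
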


\begin{proof}
Here, we provide the proof of equation (\ref{eqn:level5-(1,1/5),(1,3/5)}). 
Equation (\ref{eqn:level5-(1/5,1),(3/5,1)}) can be proved in the same manner. 
First, note that 
$
\dim \mathcal{F}_{2}\left[
\begin{array}{c}
0 \\
0
\end{array}
\right]
=
2,
$ 
and 
\begin{equation*}
\theta
\left[
\begin{array}{c}
1 \\
\frac15
\end{array}
\right](z,\tau)
\theta
\left[
\begin{array}{c}
1 \\
\frac95
\end{array}
\right](z,\tau)
, \,
\theta 
\left[
\begin{array}{c}
1 \\
\frac35
\end{array}
\right](z,\tau)
\theta
\left[
\begin{array}{c}
1 \\
\frac75
\end{array}
\right](z,\tau), \,
\theta^2
\left[
\begin{array}{c}
1 \\
1
\end{array}
\right](z,\tau) \in
\mathcal{F}_{2}\left[
\begin{array}{c}
0 \\
0
\end{array}
\right].  
\end{equation*}
Therefore, there exist some complex numbers, $x_1, x_2$, and $ x_3$, which are not all zeros, such that 
 \begin{align*}
 &
 x_1
\theta
\left[
\begin{array}{c}
1 \\
\frac15
\end{array}
\right](z,\tau)
\theta
\left[
\begin{array}{c}
1 \\
\frac95
\end{array}
\right](z,\tau)
+x_2
 \theta
\left[
\begin{array}{c}
1\\
\frac35
\end{array}
\right](z,\tau)
\theta
\left[
\begin{array}{c}
1 \\
\frac75
\end{array}
\right](z,\tau)
+
x_3
\theta^2
\left[
\begin{array}{c}
1 \\
1
\end{array}
\right](z,\tau)=0. 
\end{align*}
Note that in the fundamental parallelogram, 
the zero of 
$
\theta
\left[
\begin{array}{c}
1 \\
\frac15
\end{array}
\right](z),
$ 
$
\theta
\left[
\begin{array}{c}
1 \\
\frac35
\end{array}
\right](z),
$ 
or 
$
\theta
\left[
\begin{array}{c}
1 \\
1
\end{array}
\right](z)
$ 
is $z=2/5,$ $1/5$ or $0.$ 
By substituting $z=2/5, 1/5,$ and $0,$ we have 
\begin{alignat*}{4}
&
&    
&
x_2
\theta
\left[
\begin{array}{c}
1\\
\frac35
\end{array}
\right]  
&    
&+x_3
\theta
\left[
\begin{array}{c}
1 \\
\frac15
\end{array}
\right]  
&  
&=0,  \\
&-x_1
\theta
\left[
\begin{array}{c}
1\\
\frac15
\end{array}
\right]  
&  
&
&
&+x_3
\theta
\left[
\begin{array}{c}
1 \\
\frac35
\end{array}
\right]  
&  
&=0,  \\
&
-
x_1
\theta^2
\left[
\begin{array}{c}
1 \\
\frac15
\end{array}
\right]  
&
-&x_2
\theta^2\left[
\begin{array}{c}
1 \\
\frac35
\end{array}
\right] 
&
&
&
&=0. 
\end{alignat*}
By solving this system of equations, 
we get 
\begin{equation*}
(x_1,x_2,x_3)=\alpha
\left(
\theta^2\left[
\begin{array}{c}
1 \\
\frac35
\end{array}
\right], 
-
\theta^2\left[
\begin{array}{c}
1 \\
\frac15
\end{array}
\right], 
\theta\left[
\begin{array}{c}
1 \\
\frac15
\end{array}
\right] 
\theta\left[
\begin{array}{c}
1\\
\frac53
\end{array}
\right] 
\right) \,\,\,   \text{for some} \,\,\alpha\in\mathbb{C}\setminus\{0\},
\end{equation*}
which proves the proposition. 
\end{proof}

\begin{lemma}
\label{lem:first-order-deri-square}
{\it
For every $(z,\tau)\in\mathbb{C}\times\mathbb{H}^2,$ 
we have 
\begin{equation*}
\left\{
\frac{
\theta^{\prime}
\left[
\begin{array}{c}
\epsilon \\
\epsilon^{\prime}
\end{array}
\right](z)
}
{
\theta
\left[
\begin{array}{c}
\epsilon \\
\epsilon^{\prime}
\end{array}
\right](z)
}
\right\}^2
=
\frac{
\theta^{\prime \prime}
\left[
\begin{array}{c}
\epsilon \\
\epsilon^{\prime}
\end{array}
\right](z)
}
{
\theta
\left[
\begin{array}{c}
\epsilon \\
\epsilon^{\prime}
\end{array}
\right](z)
}
-
\frac
{
d^2
}
{
dz^2
}
\log
\theta
\left[
\begin{array}{c}
\epsilon \\
\epsilon^{\prime}
\end{array}
\right](z). 
\end{equation*}
}
\end{lemma}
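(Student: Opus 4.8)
The plan is to treat this as a purely formal identity in the calculus of logarithmic derivatives, valid at any point where the theta function does not vanish. To ease notation I would write $f(z) := \theta\left[\begin{smallmatrix}\epsilon\\ \epsilon'\end{smallmatrix}\right](z,\tau)$, so that $f'(z)=\theta'\left[\begin{smallmatrix}\epsilon\\ \epsilon'\end{smallmatrix}\right](z)$ and $f''(z)=\theta''\left[\begin{smallmatrix}\epsilon\\ \epsilon'\end{smallmatrix}\right](z)$ denote its first and second $z$-derivatives. In this abbreviation the claimed identity reads
\[
\left(\frac{f'}{f}\right)^2 = \frac{f''}{f} - \frac{d^2}{dz^2}\log f .
\]

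First I would compute the first logarithmic derivative, $\dfrac{d}{dz}\log f = \dfrac{f'}{f}$, which is legitimate on the open set where $f(z)\neq 0$. Differentiating once more and applying the quotient rule gives
\[
\frac{d^2}{dz^2}\log f = \frac{d}{dz}\!\left(\frac{f'}{f}\right) = \frac{f''\,f-(f')^2}{f^2} = \frac{f''}{f} - \left(\frac{f'}{f}\right)^2 .
\]
Transposing the final term to the left-hand side yields exactly the asserted equality, and restoring the full theta notation completes the argument.

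Both sides are meromorphic in $z$, so once the identity is established on the set where $\theta\left[\begin{smallmatrix}\epsilon\\ \epsilon'\end{smallmatrix}\right](z)\neq 0$ it holds as an equality of meromorphic functions by analytic continuation; the only point worth remarking is that the poles (coming from the single zero of $\theta$ in the fundamental parallelogram) appear on both sides and the formula is best read as an identity between the two meromorphic functions it defines. The computation invokes nothing beyond the chain and quotient rules, so I anticipate no genuine obstacle: the lemma is simply a repackaging of the general fact that $(\log g)'' = g''/g - (g'/g)^2$ for any nonvanishing differentiable $g$.
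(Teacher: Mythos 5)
Your proof is correct and is exactly what the paper intends: the paper's entire proof reads ``The lemma can be proved by direct calculation,'' and your computation $\frac{d^2}{dz^2}\log f = \frac{f''}{f} - \bigl(\frac{f'}{f}\bigr)^2$ followed by rearrangement is precisely that calculation, made explicit. The remark about meromorphic continuation past the zero of the theta function is a sensible (if optional) addition.
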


\begin{proof}
The lemma can be proved by direct calculation. 
\end{proof}

\begin{theorem}
\label{thm:pro-series-level5}
{\it 
For every $\tau\in\mathbb{H}^2,$ we have 
\begin{align}
\frac{\eta^5(\tau)}{\eta(5\tau)}
=
\frac
{
(q;q)_{\infty}^5
}
{
(q^5; q^5)_{\infty}
}
=&
1-5
\sum_{n=1}^{\infty}
\left(  \sum_{d|n} d \left( \frac{d}{5} \right)         \right) q^n, \label{eqn:pro-series-deg1-(1,1/5),(1,3/5)}  \\
\frac{\eta^5(5\tau)}{\eta(\tau)}
=
q\frac{(q^5;q^5)_{\infty}^5}{(q;q)_{\infty}}
=&
\sum_{n=1}^{\infty}
\left(  \sum_{d|n} \frac{n}{d} \left(\frac{d}{5}\right)    \right) q^n,   \quad q=\exp(2\pi i \tau),\label{eqn:pro-series-deg1-(1/5,1),(3/5,1)}
\end{align}
where for each $m\in\mathbb{N},$ 
\begin{equation*}
\left(
\frac{m}{5}
\right)
=
\begin{cases}
1  &\text{if $m\equiv \pm1 \mod 5,$}\\
-1  &\text{if $m\equiv \pm2 \mod 5,$}\\
0  &\text{if $m\equiv 0 \hspace{3mm}\mod 5. $}  \\
\end{cases}
\end{equation*}
}
\end{theorem}

\begin{proof}
By comparing the coefficients of term $z^2$ in equations (\ref{eqn:level5-(1,1/5),(1,3/5)}) and (\ref{eqn:level5-(1/5,1),(3/5,1)}),  
we have 
\begin{equation}
\label{eqn:coefficients-level5-(1)}
\frac
{
\left\{
\theta^{\prime}
\left[
\begin{array}{c}
1 \\
1
\end{array}
\right]
\right\}^2
}
{
\theta
\left[
\begin{array}{c}
1 \\
\frac15
\end{array}
\right]
\theta
\left[
\begin{array}{c}
1 \\
\frac35
\end{array}
\right]
}
=
\left.
\frac{d^2}{dz^2}
\log
\theta
\left[
\begin{array}{c}
1 \\
\frac15
\end{array}
\right](z)
\right|_{z=0}
-
\left.
\frac{d^2}{dz^2}
\log
\theta
\left[
\begin{array}{c}
1 \\
\frac35
\end{array}
\right](z)
\right|_{z=0}, 
\end{equation}
and
\begin{equation}
\label{eqn:coefficients-level5-(2)}
\zeta_5
\frac
{
\left\{
\theta^{\prime}
\left[
\begin{array}{c}
1 \\
1
\end{array}
\right]
\right\}^2
}
{
\theta
\left[
\begin{array}{c}
\frac15 \\
1
\end{array}
\right]
\theta
\left[
\begin{array}{c}
\frac35 \\
1
\end{array}
\right]
}
=
\left.
\frac{d^2}{dz^2}
\log
\theta
\left[
\begin{array}{c}
\frac35 \\
1
\end{array}
\right](z)
\right|_{z=0}
-
\left.
\frac{d^2}{dz^2}
\log
\theta
\left[
\begin{array}{c}
\frac15 \\
1
\end{array}
\right](z)
\right|_{z=0}. 
\end{equation}
Therefore, 
the theorem can be obtained 
using Jacobi's triple product identity (\ref{eqn:Jacobi-triple}).
\end{proof}

\begin{theorem}
\label{thm:expression-level5(1)}
{\it For $q\in\mathbb{C}$ with $|q|<1,$ set 
\begin{align*}
P(q)=&\cot \frac{2 \pi}{5} + 4 \sin \frac{4 \pi}{5} \sum_{n=1}^{\infty} (d_{1,5}(n)-d_{4,5}(n)) q^n
-4\sin \frac{2\pi}{5} \sum_{n=1}^{\infty} (d_{2,5}(n)-d_{3,5}(n)) q^n, \\
Q(q)=&\cot \frac{ \pi}{5} + 4 \sin \frac{2 \pi}{5} \sum_{n=1}^{\infty} (d_{1,5}(n)-d_{4,5}(n)) q^n
+4\sin \frac{4\pi}{5} \sum_{n=1}^{\infty} (d_{2,5}(n)-d_{3,5}(n)) q^n. 
\end{align*}
Then, we have 
\begin{equation}
\frac
{
(q;q)_{\infty}^5
}
{
(q^5;q^5)_{\infty}
}
=
-
\frac
{
\sqrt{5}
}
{
4
}
(3P-Q)(P+3Q).
\end{equation}
}
\end{theorem}

\begin{proof}
The theorem follows from equations (\ref{eqn:2nd-deri-X-Y-(1,1/5)-(1,3/5)}) and (\ref{eqn:coefficients-level5-(1)}). 
\end{proof}

\begin{theorem}
\label{thm:expression-level5(2)}
{\it For $q\in\mathbb{C}$ with $|q|<1,$ set 
\begin{align*}
P(q)=&1+ 10 \sum_{n=1}^{\infty} (d_{2,5}(n)-d_{3,5}(n)) q^n, \,\,
Q(q)=3 + 10\sum_{n=1}^{\infty} (d_{1,5}(n)-d_{4,5}(n)) q^n. 
\end{align*}
Then, we have 
\begin{equation}
q
\frac
{
(q^5;q^5)_{\infty}^5
}
{
(q;q)_{\infty}
}
=
-
\frac{1}{100}
(3P-Q)(P+3Q),  
\end{equation}
and
\begin{equation}
\frac
{
(q;q)_{\infty}^5
}
{
(q^5;q^5)_{\infty}
}
=
\frac{1}{4} \left(P^2+4 P Q-Q^2\right). 
\end{equation}
}
\end{theorem}

\begin{proof}
The theorem can be proved in the same way as Theorem \ref{thm:expression-level5(1)}. 
\end{proof}

\section{Some theorem of Farkas and Kra}
\label{sec:theorem-Farkas-Kra}

\begin{theorem}
\label{thm:Farkas-Kra-5-(1,1/5)-(1,3/5)}
(Farkas and Kra \cite[pp. 318]{Farkas-Kra})
{\it
For every $\tau\in\mathbb{H}^2,$ we have 
\begin{equation*}
\frac{d}{d \tau} 
\log 
\left(
\frac{\eta(5\tau)}{\eta(\tau)}
\right)
+
\frac{1}{2\pi i \cdot 3}
\left[
\left\{
\frac
{
\theta^{\prime} 
\left[
\begin{array}{c}
1 \\
\frac{1}{5}
\end{array}
\right](0,\tau) 
}
{
\theta
\left[
\begin{array}{c}
1 \\
\frac{1}{5}
\end{array}
\right](0,\tau)
}
\right\}^2
+
\left\{
\frac
{
\theta^{\prime} 
\left[
\begin{array}{c}
1 \\
\frac{3}{5}
\end{array}
\right](0,\tau) 
}
{
\theta
\left[
\begin{array}{c}
1 \\
\frac{3}{5}
\end{array}
\right](0,\tau)
}
\right\}^2
\right]=0. 
\end{equation*}
}
\end{theorem}

\begin{proof}
Summing both sides of equations (\ref{eqn:2nd-deri-(1,1/5)-(1,3/5)-(1)}) and (\ref{eqn:2nd-deri-(1,1/5)-(1,3/5)-(2)}) yields 
\begin{equation*}
3
\frac
{
\theta^{\prime\prime}
\left[
\begin{array}{c}
1 \\
\frac15
\end{array}
\right]
}
{
\theta
\left[
\begin{array}{c}
1 \\
\frac15
\end{array}
\right]
}
+
3
\frac
{
\theta^{\prime\prime}
\left[
\begin{array}{c}
1 \\
\frac35
\end{array}
\right]
}
{
\theta
\left[
\begin{array}{c}
1 \\
\frac35
\end{array}
\right]
}
-
2
\frac
{
\theta^{\prime\prime \prime}
\left[
\begin{array}{c}
1 \\
1
\end{array}
\right]
}
{
\theta^{\prime}
\left[
\begin{array}{c}
1 \\
1
\end{array}
\right]
}
+
2
\left\{
\frac
{
\theta^{\prime} 
\left[
\begin{array}{c}
1 \\
\frac15
\end{array}
\right]
}
{
\theta
\left[
\begin{array}{c}
1 \\
\frac15
\end{array}
\right]
}
\right\}^2
+
2
\left\{
\frac
{
\theta^{\prime} 
\left[
\begin{array}{c}
1 \\
\frac35
\end{array}
\right]
}
{
\theta
\left[
\begin{array}{c}
1 \\
\frac35
\end{array}
\right]
}
\right\}^2
=0. 
\end{equation*}
The heat equation, Eq. (\ref{eqn:heat}), implies that 
\begin{equation*}
4\pi i 
\frac{d}{d\tau}
\log 
\frac
{
\theta^3
\left[
\begin{array}{c}
1 \\
\frac15
\end{array}
\right]
\theta^3
\left[
\begin{array}{c}
1 \\
\frac15
\end{array}
\right]
}
{
\left\{
\theta^{\prime}
\left[
\begin{array}{c}
1 \\
1
\end{array}
\right]
\right\}^2
}
+
2
\left\{
\frac
{
\theta^{\prime} 
\left[
\begin{array}{c}
1 \\
\frac15
\end{array}
\right]
}
{
\theta
\left[
\begin{array}{c}
1 \\
\frac15
\end{array}
\right]
}
\right\}^2
+
2
\left\{
\frac
{
\theta^{\prime} 
\left[
\begin{array}{c}
1 \\
\frac35
\end{array}
\right]
}
{
\theta
\left[
\begin{array}{c}
1 \\
\frac35
\end{array}
\right]
}
\right\}^2
=0. 
\end{equation*}
Jacobi's triple product identity (\ref{eqn:Jacobi-triple}) yields 
\begin{equation*}
\frac
{
\theta^3
\left[
\begin{array}{c}
1 \\
\frac15
\end{array}
\right]
\theta^3
\left[
\begin{array}{c}
1 \\
\frac15
\end{array}
\right]
}
{
\left\{
\theta^{\prime}
\left[
\begin{array}{c}
1 \\
1
\end{array}
\right]
\right\}^2
}
=
\frac{(\sqrt{5})^3}{4\pi^2}
\frac{\eta^3(5\tau)}{\eta^3(\tau)},
\end{equation*}
which proves the theorem. 
\end{proof}

\begin{corollary}
\label{coro-E_2-P-Q-(1,1/5)-(1,3/5)}
{\it 
For $q\in\mathbb{C}$ with $|q|<1,$ set 
\begin{align*}
P(q)=&\cot \frac{2 \pi}{5} + 4 \sin \frac{4 \pi}{5} \sum_{n=1}^{\infty} (d_{1,5}(n)-d_{4,5}(n)) q^n
-4\sin \frac{2\pi}{5} \sum_{n=1}^{\infty} (d_{2,5}(n)-d_{3,5}(n)) q^n, \\
Q(q)=&\cot \frac{ \pi}{5} + 4 \sin \frac{2 \pi}{5} \sum_{n=1}^{\infty} (d_{1,5}(n)-d_{4,5}(n)) q^n
+4\sin \frac{4\pi}{5} \sum_{n=1}^{\infty} (d_{2,5}(n)-d_{3,5}(n)) q^n. 
\end{align*}
Then, we have 
\begin{equation}
P^2+Q^2=\frac12\left\{-E_2(q)+5E_2(q^5)\right\}. 
\end{equation}
}
\end{corollary}

\begin{theorem}
\label{thm:E_4-E_6-q^5-(1,1/5)-(1,3/5)}
{\it
For $q\in\mathbb{C}$ with $|q|<1,$ set 
\begin{align*}
P(q)=&\cot \frac{2 \pi}{5} + 4 \sin \frac{4 \pi}{5} \sum_{n=1}^{\infty} (d_{1,5}(n)-d_{4,5}(n)) q^n
-4\sin \frac{2\pi}{5} \sum_{n=1}^{\infty} (d_{2,5}(n)-d_{3,5}(n)) q^n, \\
Q(q)=&\cot \frac{ \pi}{5} + 4 \sin \frac{2 \pi}{5} \sum_{n=1}^{\infty} (d_{1,5}(n)-d_{4,5}(n)) q^n
+4\sin \frac{4\pi}{5} \sum_{n=1}^{\infty} (d_{2,5}(n)-d_{3,5}(n)) q^n. 
\end{align*}
Then, we have 
\begin{align*}
E_4(q^5)=&P^4+3 P^3 Q-P^2 Q^2-3 P Q^3+Q^4, \\
E_6(q^5)=&\frac{1}{40} \left(P^2+Q^2\right) \left(41 P^4+198 P^3 Q+154 P^2 Q^2-198 P Q^3+41 Q^4\right).
\end{align*}
}
\end{theorem}

\begin{proof}
Changing $q\rightarrow q^5$ in Ramanujan's system (\ref{eqn:Ramanujan-ODE}), we have 
\begin{equation*}
q\frac{d}{dq} E_2(q^5)=\frac{5}{12}\left\{  \left( E_2(q^5) \right)^2-E_4(q^5)   \right\} , \,\,
q\frac{d}{dq} E_4(q^5)=\frac53 \left\{  E_2(q^5)E_4(q^5) -E_6(q^5)               \right\}. 
\end{equation*}
From Corollary \ref{coro-E_2-P-Q-(1,1/5)-(1,3/5)}, 
we have 
\begin{equation*}
E_2(q^5)=\frac15 (2P^2+2Q^2+R),  \,\,  R=E_2(q),
\end{equation*}
which implies that 
\begin{equation*}
q\frac{d}{dq} E_2(q^5)=
-\frac{7 P^4}{20}-\frac{5 P^3 Q}{4}+\frac{11 P^2 Q^2}{20}   +\frac{5 P Q^3}{4}-\frac{7 Q^4}{20}       +\frac{P^2 R}{15}+\frac{Q^2 R}{15}+\frac{R^2}{60}.
\end{equation*}
Therefore, it follows that 
\begin{equation*}
E_4(q^5)=\left( E_2(q^5) \right)^2-\frac{12}{5} q\frac{d}{dq} E_2(q^5)  =P^4+3 P^3 Q-P^2 Q^2-3 P Q^3+Q^4. 
\end{equation*}
\par
In the same way, 
we note that 
\begin{align*}
q\frac{d}{dq} E_4(q^5)
=&
-\frac{25 P^6}{24}-\frac{25 P^5 Q}{4}-\frac{65 P^4 Q^2}{8}  -\frac{65 P^2 Q^4}{8} +\frac{25 P Q^5}{4} -\frac{25 Q^6}{24}  \\
&\hspace{10mm}
+P^3 Q R-\frac{1}{3} P^2 Q^2 R-P Q^3 R  +\frac{P^4 R}{3}   +\frac{Q^4 R}{3},
\end{align*}
which implies that 
\begin{align*}
E_6(q^5)=& E_2(q^5)E_4(q^5)-\frac35 q\frac{d}{dq} E_4(q^5) \\
=&\frac{1}{40} \left(P^2+Q^2\right) \left(41 P^4+198 P^3 Q+154 P^2 Q^2-198 P Q^3+41 Q^4\right). 
\end{align*}
\end{proof}

\begin{corollary}
\label{coro-E4-E6-level5-q^5-(S,T)}
{\it
For $q\in\mathbb{C}$ with $|q|<1,$ 
set 
\begin{align*}
S(q)=&
\frac
{
\sqrt{250-110 \sqrt{5}}
}
{
5
}
\prod_{n=1}^{\infty}
\frac
{
(1-q^n)^5
}
{
(1-q^{5n})^3
}
\left(
1+
\frac{1-\sqrt{5}}{2} 
q^n
+
q^{2n} 
\right)^5  \\ 
=&
\frac
{
\sqrt{250-110 \sqrt{5}}
}
{
5
}
\left\{
1
-
\frac52(1+\sqrt{5})
\sum_{n=1}^{\infty} (d_{1,5}(n)-d_{4,5}(n)) q^n 
+
\frac52
(7+3 \sqrt{5})
\sum_{n=1}^{\infty} (d_{2,5}(n)-d_{3,5}(n)) q^n
\right\},  
\\
T(q)=&
\frac
{
\sqrt{250+110 \sqrt{5}}
}
{
5
}
\prod_{n=1}^{\infty}
\frac
{
(1-q^n)^5
}
{
(1-q^{5n})^3
}
\left(
1+
\frac{1+\sqrt{5}}{2} 
q^n
+
q^{2n} 
\right)^5  \\
=&
\frac
{
\sqrt{250+110 \sqrt{5}}
}
{
5
}
\left\{
1
+
\frac52(-1+\sqrt{5})
\sum_{n=1}^{\infty} (d_{1,5}(n)-d_{4,5}(n)) q^n 
+
\frac52
(7-3 \sqrt{5})
\sum_{n=1}^{\infty} (d_{2,5}(n)-d_{3,5}(n)) q^n
\right\} . 
\end{align*}
Then we have 
\begin{align*}
E_4(q^5)=&
\frac{S^4-228 S^3 T+494 S^2 T^2+228 S T^3+T^4}{10000}       \\
E_6(q^5)=&-\frac{\left(S^2+T^2\right) \left(S^4+522 S^3 T-10006 S^2 T^2-522 S T^3+T^4\right)}{1000000}, 
\end{align*}
and 
\begin{align*}
q^5(q^5:q^5)_{\infty}^{24}=&
\frac{
(E_4(q^5))^3-(E_6(q^5))^2
}{1728}
=
-\frac{S T \left(S^2+11 S T-T^2\right)^5}{1000000000000} \\
j(q^5)=&-\frac{\left(S^4-228 S^3 T+494 S^2 T^2+228 S T^3+T^4\right)^3}{S T \left(S^2+11 S T-T^2\right)^5}  \\
=&-\frac{\left(f^4+228 f^3+494 f^2-228 f+1\right)^3}{f \left(-f^2+11 f+1\right)^5},    \,\,f=T/S.
\end{align*}
}
\end{corollary}

\begin{proof}
The corollary can be obtained by considering 
\begin{equation*}
S=-3 P+Q, \,\,
T= P+3Q, \,\,\mathrm{and} \,\,f=T/S.
\end{equation*}
\end{proof}

\begin{corollary}
\label{coro-E4-E6-level5-q^5-(V,W)}
{\it
For $q\in\mathbb{C}$ with $|q|<1,$ 
set 
\begin{align*}
V(q)=&\prod_{n=1}^{\infty}
\frac
{
(1-q^n)^5
}
{
(1-q^{5n})^3
}
\left(
1+
\frac{1-\sqrt{5}}{2} 
q^n
+
q^{2n} 
\right)^5   \\
=&
1
-
\frac52(1+\sqrt{5})
\sum_{n=1}^{\infty} (d_{1,5}(n)-d_{4,5}(n)) q^n 
+
\frac52
(7+3 \sqrt{5})
\sum_{n=1}^{\infty} (d_{2,5}(n)-d_{3,5}(n)) q^n,   \\
W(q)=&
\prod_{n=1}^{\infty}
\frac
{
(1-q^n)^5
}
{
(1-q^{5n})^3
}
\left(
1+
\frac{1+\sqrt{5}}{2} 
q^n
+
q^{2n} 
\right)^5  \\
=&
1
+
\frac52(-1+\sqrt{5})
\sum_{n=1}^{\infty} (d_{1,5}(n)-d_{4,5}(n)) q^n 
+
\frac52
(7-3 \sqrt{5})
\sum_{n=1}^{\infty} (d_{2,5}(n)-d_{3,5}(n)) q^n. 
\end{align*}
Then we have 
\begin{align*}
&E_4(q^5)=
\frac{
1
}
{6250}  \times \\
&\times
\Big\{
\left(123-55 \sqrt{5}\right) V^4
+228 \left(11-5 \sqrt{5}\right) V^3 W
+988 V^2 W^2
+228 \left(11+5 \sqrt{5}\right) V W^3
+\left(123+55 \sqrt{5}\right) W^4 
\Big\},
 \\
&E_6(q^5)=
-\frac{\left(\left(11 \sqrt{5}-25\right) V^2-\left(25+11 \sqrt{5}\right) W^2\right) }{1562500}\times  \\
&\times 
\Big\{
\left(55 \sqrt{5}-123\right) V^4-522 \left(5 \sqrt{5}-11\right) V^3 W+20012 V^2 W^2  \\
&\hspace{60mm} +522 \left(11+5 \sqrt{5}\right) V W^3-\left(123+55 \sqrt{5}\right)W^4 
\Big\}
\end{align*}
and 
\begin{align*}
&q^5(q^5;q^5)_{\infty}^{24}
=
-\frac{V W (V-W)^5}{61035156250}\times \\
&\times
\Big\{
\left(75025 \sqrt{5}-167761\right) V^5+10 \left(305 \sqrt{5}-682\right) V^4 W+10 \left(5 \sqrt{5}-11\right) V^3 W^2\\
&\hspace{20mm}+10 \left(11+5 \sqrt{5}\right) V^2 W^3   +10 \left(682+305 \sqrt{5}\right) V W^4+\left(167761+75025 \sqrt{5}\right)W^5
\Big\}.
\end{align*}
}
\end{corollary}

\begin{theorem}
\label{thm:Farkas-Kra-5-(1/5,1)-(3/5,1)}
{\it
For every $\tau\in\mathbb{H}^2,$ we have 
\begin{equation*}
\frac{d}{d \tau} 
\log 
\left(
\frac{\eta(\tau/5)}{\eta(\tau)}
\right)
+
\frac{1}{2\pi i \cdot 3}
\left[
\left\{
\frac
{
\theta^{\prime} 
\left[
\begin{array}{c}
\frac15 \\
1
\end{array}
\right](0,\tau) 
}
{
\theta
\left[
\begin{array}{c}
\frac15 \\
1
\end{array}
\right](0,\tau)
}
\right\}^2
+
\left\{
\frac
{
\theta^{\prime} 
\left[
\begin{array}{c}
\frac35 \\
1
\end{array}
\right](0,\tau) 
}
{
\theta
\left[
\begin{array}{c}
\frac35 \\
1
\end{array}
\right](0,\tau)
}
\right\}^2
\right]=0. 
\end{equation*}
}
\end{theorem}

\begin{proof}
Summing both sides of equations (\ref{eqn:2nd-deri-(1/5,1)-(3/5,1)-(1)}) and (\ref{eqn:2nd-deri-(1/5,1)-(3/5,1)-(2)}) yields 
\begin{equation*}
3
\frac
{
\theta^{\prime\prime}
\left[
\begin{array}{c}
\frac15 \\
1
\end{array}
\right]
}
{
\theta
\left[
\begin{array}{c}
\frac15 \\
1
\end{array}
\right]
}
+
3
\frac
{
\theta^{\prime\prime}
\left[
\begin{array}{c}
\frac35\\
1
\end{array}
\right]
}
{
\theta
\left[
\begin{array}{c}
\frac35 \\
1
\end{array}
\right]
}
-
2
\frac
{
\theta^{\prime\prime \prime}
\left[
\begin{array}{c}
1 \\
1
\end{array}
\right]
}
{
\theta^{\prime}
\left[
\begin{array}{c}
1 \\
1
\end{array}
\right]
}
+
2
\left\{
\frac
{
\theta^{\prime} 
\left[
\begin{array}{c}
\frac15 \\
1
\end{array}
\right]
}
{
\theta
\left[
\begin{array}{c}
\frac15 \\
1
\end{array}
\right]
}
\right\}^2
+
2
\left\{
\frac
{
\theta^{\prime} 
\left[
\begin{array}{c}
\frac35 \\
1
\end{array}
\right]
}
{
\theta
\left[
\begin{array}{c}
\frac35 \\
1
\end{array}
\right]
}
\right\}^2
=0. 
\end{equation*}
The heat equation (\ref{eqn:heat}) implies that 
\begin{equation*}
4\pi i 
\frac{d}{d\tau}
\log 
\frac
{
\theta^3
\left[
\begin{array}{c}
\frac15 \\
1
\end{array}
\right]
\theta^3
\left[
\begin{array}{c}
\frac35 \\
1
\end{array}
\right]
}
{
\left\{
\theta^{\prime}
\left[
\begin{array}{c}
1 \\
1
\end{array}
\right]
\right\}^2
}
+
2
\left\{
\frac
{
\theta^{\prime} 
\left[
\begin{array}{c}
\frac15 \\
1
\end{array}
\right]
}
{
\theta
\left[
\begin{array}{c}
\frac15 \\
1
\end{array}
\right]
}
\right\}^2
+
2
\left\{
\frac
{
\theta^{\prime} 
\left[
\begin{array}{c}
\frac35 \\
1
\end{array}
\right]
}
{
\theta
\left[
\begin{array}{c}
\frac35 \\
1
\end{array}
\right]
}
\right\}^2
=0. 
\end{equation*}
Jacobi's triple product identity (\ref{eqn:Jacobi-triple}) yields 
\begin{equation*}
\frac
{
\theta^3
\left[
\begin{array}{c}
\frac15 \\
1
\end{array}
\right]
\theta^3
\left[
\begin{array}{c}
\frac35 \\
1
\end{array}
\right]
}
{
\left\{
\theta^{\prime}
\left[
\begin{array}{c}
1 \\
1
\end{array}
\right]
\right\}^2
}
=
\frac{\zeta_5^3}{4\pi^2}
\frac{\eta^3(\tau/5)}{\eta^3(\tau)},
\end{equation*}
which proves the theorem. 
\end{proof}

\begin{corollary}
\label{coro-E_2-P-Q-(1/5,1)-(3/5,1)}
{\it
For $q\in\mathbb{C}$ with $|q|<1,$ set
\begin{align*}
P(q)=&1+ 10 \sum_{n=1}^{\infty} (d_{2,5}(n)-d_{3,5}(n)) q^n, \,\,
Q(q)=3 + 10\sum_{n=1}^{\infty} (d_{1,5}(n)-d_{4,5}(n)) q^n. 
\end{align*}
Then, we have 
\begin{equation}
P^2+Q^2=\frac52\left\{ -E_2(q)+5E_2(q^5) \right\}.
\end{equation}
}
\end{corollary}

\begin{theorem}
\label{thm:E_4-E_6-q-(1/5,1)-(3/5,1)}
{\it
For $q\in\mathbb{C}$ with $|q|<1,$ set 
\begin{align*}
P(q)=&1+ 10 \sum_{n=1}^{\infty} (d_{2,5}(n)-d_{3,5}(n)) q^n, \,\,
Q(q)=3 + 10\sum_{n=1}^{\infty} (d_{1,5}(n)-d_{4,5}(n)) q^n. 
\end{align*}
Then, we have 
\begin{align*}
E_4(q)=&P^4+3 P^3 Q-P^2 Q^2-3 P Q^3+Q^4, \\
E_6(q)=&-\frac{1}{40} \left(P^2+Q^2\right) \left(41 P^4+198 P^3 Q+154 P^2 Q^2-198 P Q^3+41 Q^4\right),
\end{align*}
and
\begin{align*}
1728 q(q;q)_{\infty}^{24}
=&
(E_4(q))^3-(E_6(q))^2
=-\frac{27 (3 P-Q) (P+3 Q) \left(P^2+4 P Q-Q^2\right)^5}{1600}, \\ 
j(q)=&
\frac{ 1728  (E_4(q))^3 }{ (E_4(q))^3-(E_6(q))^2 } 
=
-\frac{102400 \left(P^4+3 P^3 Q-P^2 Q^2-3 P Q^3+Q^4\right)^3}{(3 P-Q) (P+3 Q) \left(P^2+4 P Q-Q^2\right)^5} \\
=&
\frac{\left(g^4+228 g^3+494 g^2-228 g+1\right)^3}{g \left(g^2-11 g-1\right)^5}, 
\end{align*}
where 
\begin{equation*}
g(q)=\frac{3P-Q}{P+3Q}=
-
q
\prod_{n=1}^{\infty}\frac{(1-q^{5n-1})^5(1-q^{5n-4})^5  }{(1-q^{5n-2})^5(1-q^{5n-3})^5  }. 
\end{equation*}
}
\end{theorem}

\begin{proof}
From Corollary \ref{coro-E_2-P-Q-(1/5,1)-(3/5,1)}, we have 
\begin{equation*}
E_2(q)=\frac{ -2P^2-2Q^2+25 R }{5},         \,\,\,R=E_2(q^5),
\end{equation*}
which implies that 
\begin{equation*}
q\frac{d}{dq} E_2(q)=
-\frac{7 P^4}{100}-\frac{P^3 Q}{4}+\frac{11 P^2 Q^2}{100} +\frac{P Q^3}{4}-\frac{7 Q^4}{100}
-\frac{P^2 R}{3}-\frac{Q^2 R}{3}+\frac{25 R^2}{12}. 
\end{equation*}
By Ramanujan's system (\ref{eqn:Ramanujan-ODE}),  we obtain 
\begin{equation*}
E_4(q)=\left\{  E_2(q)  \right\}^2-12 q\frac{d}{dq} E_2(q)
=P^4+3 P^3 Q-P^2 Q^2-3 P Q^3+Q^4. 
\end{equation*}
In the same way, 
we note that 
\begin{align*}
q\frac{d}{dq} E_4(q)=&
\frac{5 P^6}{24}+\frac{5 P^5 Q}{4}+\frac{13 P^4 Q^2}{8}       +\frac{13 P^2 Q^4}{8} -\frac{5 P Q^5}{4} +\frac{5 Q^6}{24}  \\
&\hspace{20mm} -\frac{5}{3} P^2 Q^2 R+5 P^3 Q R-5 P Q^3 R   +\frac{5 P^4 R}{3}+\frac{5 Q^4 R}{3},
\end{align*}
which implies that 
\begin{align*}
E_6(q)=&E_2(q) E_4(q)-3  q\frac{d}{dq} E_4(q)  \\
=&-\frac{1}{40} \left(P^2+Q^2\right) \left(41 P^4+198 P^3 Q+154 P^2 Q^2-198 P Q^3+41 Q^4\right). 
\end{align*}
\end{proof}

\begin{corollary}
\label{coro-E4-E6-level5-q-(S,T)}
{\it
For $q\in\mathbb{C}$ with $|q|<1,$ 
set 
\begin{align*}
S(q)=&
q
\prod_{n=1}^{\infty} 
\frac
{
(1-q^n)^2
}
{
(1-q^{5n-2})^5(1-q^{5n-3})^5    
}
=
\sum_{n=1}^{\infty} (d_{1,5}(n)-d_{4,5}(n)) q^n
-3
\sum_{n=1}^{\infty} (d_{2,5}(n)-d_{3,5}(n)) q^n,  
\end{align*}
and 
\begin{align*}
T(q)=&
\prod_{n=1}^{\infty} 
\frac
{
(1-q^n)^2
}
{
(1-q^{5n-1})^5(1-q^{5n-4})^5    
}
=
1+
3
\sum_{n=1}^{\infty} (d_{1,5}(n)-d_{4,5}(n)) q^n
+
\sum_{n=1}^{\infty} (d_{2,5}(n)-d_{3,5}(n)) q^n.
\end{align*}
Then, we have 
\begin{align*}
E_4(q)=&
S^4-228 S^3 T+494 S^2 T^2+228 S T^3+T^4,  \\
E_6(q)=&
\left(S^2+T^2\right) \left(S^4+522 S^3 T-10006 S^2 T^2-522 S T^3+T^4\right), 
\end{align*}
and
\begin{align*}
q(q;q)_{\infty}=&-S T \left(S^2+11 S T-T^2\right)^5  \\
j(q)=&
-\frac{\left(S^4-228 S^3 T+494 S^2 T^2+228 S T^3+T^4\right)^3}{S T \left(S^2+11 S T-T^2\right)^5}
=
-\frac{\left(g^4-228 g^3+494 g^2+228 g+1\right)^3}{g \left(g^2+11 g-1\right)^5},  
\end{align*}
where 
\begin{equation*}
g=S/T
=
q
\prod_{n=1}^{\infty}\frac{(1-q^{5n-1})^5(1-q^{5n-4})^5  }{(1-q^{5n-2})^5(1-q^{5n-3})^5  }. 
\end{equation*}
}
\end{corollary}

\subsubsection*{Remark}

Corollary \ref{coro-E_2-P-Q-(1/5,1)-(3/5,1)} and Theorem \ref{thm:E_4-E_6-q-(1/5,1)-(3/5,1)} enable us to consider the extension of the differential fields, 
\begin{equation*}
\mathbb{Q}\left( E_2(q), E_4(q), E_6(q)  \right) \subset \mathbb{Q}\left( P(q), Q(q), E_2(q)  \right),
\end{equation*}
where 
\begin{align*}
P(q)=&1+ 10 \sum_{n=1}^{\infty} (d_{2,5}(n)-d_{3,5}(n)) q^n, \,\,
Q(q)=3 + 10\sum_{n=1}^{\infty} (d_{1,5}(n)-d_{4,5}(n)) q^n. 
\end{align*}

\section{Riccati equations satisfied by level 5 modular forms}
\label{sec:Riccati-level-5}

\subsection{Derivation of Riccati equation (1)}

\begin{proposition}
\label{prop:2nd-order-theta(1,1/5)-(1,3/5)}
{\it
For every $\tau\in\mathbb{H}^2,$ we have 
\begin{equation*}
\frac
{
\theta^{\prime \prime}
\left[
\begin{array}{c}
1 \\
\frac15
\end{array}
\right] 
}
{
\theta
\left[
\begin{array}{c}
1 \\
\frac15
\end{array}
\right] 
}
-
\frac
{
\theta^{\prime \prime}
\left[
\begin{array}{c}
1 \\
\frac35
\end{array}
\right] 
}
{
\theta
\left[
\begin{array}{c}
1 \\
\frac35
\end{array}
\right] 
}
=
\frac
{
\left\{
\theta^{\prime}
\left[
\begin{array}{c}
1 \\
1
\end{array}
\right] 
\right\}^2
}
{
100
\theta^6
\left[
\begin{array}{c}
1 \\
\frac15
\end{array}
\right] 
\theta^6
\left[
\begin{array}{c}
1 \\
\frac35
\end{array}
\right] 
}
\left\{
-8
\theta^{10}
\left[
\begin{array}{c}
1 \\
\frac15
\end{array}
\right] 
+88
\theta^5
\left[
\begin{array}{c}
1 \\
\frac15
\end{array}
\right] 
\theta^5
\left[
\begin{array}{c}
1 \\
\frac35
\end{array}
\right] 
+8
\theta^{10}
\left[
\begin{array}{c}
1 \\
\frac35
\end{array}
\right] 
\right\}. 
\end{equation*}
}
\end{proposition}

\begin{proof}
The proposition can be obtained by 
considering the derivative formulas (\ref{eqn:analogue-Jacobi-(1,1/5)}) and (\ref{eqn:analogue-Jacobi-(1,3/5)}) and 
by comparing the coefficients of term $z^2$ in the equation (\ref{eqn:level5-(1,1/5),(1,3/5)}). 
\end{proof}

\begin{theorem}
\label{thm:q-Riccati-diff-eq-modulus5-(1)}
{\it
For every $\tau\in\mathbb{H}^2,$ set
\begin{equation*}
W=
\frac{
\theta^5
\left[
\begin{array}{c}
1 \\
\frac15
\end{array}
\right](0,\tau)
}
{
\theta^5
\left[
\begin{array}{c}
1 \\
\frac35
\end{array}
\right](0,\tau)
}=\frac{11+5\sqrt{5}}{2}
\prod_{n=1}^{\infty}
\frac{ \left(1+\frac{1+\sqrt{5}}{2}q^n+q^{2n} \right)^5 }{ \left(1+\frac{1-\sqrt{5}}{2}q^n+q^{2n} \right)^5 },
 \,\,q=\exp(2\pi i \tau).
\end{equation*}
$W$ then satisfies the following Riccati equation:
\begin{equation}
\label{eqn:Riccati-q-diff-eq-modulus5-(1)}
q
\frac{d}{dq} W
=
\frac{1}{(\sqrt{5})^3}
\frac{(q;q)_{\infty}^5}{(q^5;q^5)_{\infty}}
\left(
W^2-11W-1
\right). 
\end{equation}
}
\end{theorem}

\begin{proof}
The heat equation (\ref{eqn:heat}) and 
Proposition \ref{prop:2nd-order-theta(1,1/5)-(1,3/5)}
imply that 
\begin{align*}
&
4\pi i 
\frac
{
d
}
{
d\tau
}
\log 
\theta
\left[
\begin{array}{c}
1 \\
\frac15
\end{array}
\right] 
-
4\pi i 
\frac
{
d
}
{
d\tau
}
\log 
\theta
\left[
\begin{array}{c}
1 \\
\frac35
\end{array}
\right]   \\
=&
\frac
{
\left\{
\theta^{\prime}
\left[
\begin{array}{c}
1 \\
1
\end{array}
\right] 
\right\}^2
}
{
100
\theta
\left[
\begin{array}{c}
1 \\
\frac15
\end{array}
\right] 
\theta
\left[
\begin{array}{c}
1 \\
\frac35
\end{array}
\right] 
}
\frac
{
\left\{
-8
\theta^{10}
\left[
\begin{array}{c}
1 \\
\frac15
\end{array}
\right] 
+88
\theta^5
\left[
\begin{array}{c}
1 \\
\frac15
\end{array}
\right] 
\theta^5
\left[
\begin{array}{c}
1 \\
\frac35
\end{array}
\right] 
+8
\theta^{10}
\left[
\begin{array}{c}
1 \\
\frac35
\end{array}
\right] 
\right\}
}
{
\theta^5
\left[
\begin{array}{c}
1 \\
\frac15
\end{array}
\right] 
\theta^5
\left[
\begin{array}{c}
1 \\
\frac35
\end{array}
\right] 
},
\end{align*}
which shows that 
\begin{align*}
&
4\pi i 
\frac
{
d
}
{
d \tau
}
\log 
\theta^5
\left[
\begin{array}{c}
1 \\
\frac15
\end{array}
\right] 
-
4\pi i 
\frac
{
d
}
{
d\tau
}
\log 
\theta^5
\left[
\begin{array}{c}
1 \\
\frac35
\end{array}
\right]   \\
=&
\frac
{
\left\{
\theta^{\prime}
\left[
\begin{array}{c}
1 \\
1
\end{array}
\right] 
\right\}^2
}
{
20
\theta
\left[
\begin{array}{c}
1 \\
\frac15
\end{array}
\right] 
\theta
\left[
\begin{array}{c}
1 \\
\frac35
\end{array}
\right] 
}
\frac
{
\left\{
-8
\theta^{10}
\left[
\begin{array}{c}
1 \\
\frac15
\end{array}
\right] 
+88
\theta^5
\left[
\begin{array}{c}
1 \\
\frac15
\end{array}
\right] 
\theta^5
\left[
\begin{array}{c}
1 \\
\frac35
\end{array}
\right] 
+8
\theta^{10}
\left[
\begin{array}{c}
1 \\
\frac35
\end{array}
\right] 
\right\}
}
{
\theta^5
\left[
\begin{array}{c}
1 \\
\frac15
\end{array}
\right] 
\theta^5
\left[
\begin{array}{c}
1 \\
\frac35
\end{array}
\right] 
}.
\end{align*}
\par
Jacobi's triple product identity (\ref{eqn:Jacobi-triple}) yields 
\begin{equation*}
\frac
{
\left\{
\theta^{\prime}
\left[
\begin{array}{c}
1 \\
1
\end{array}
\right] 
\right\}^2
}
{
\theta
\left[
\begin{array}{c}
1 \\
\frac15
\end{array}
\right] 
\theta
\left[
\begin{array}{c}
1 \\
\frac35
\end{array}
\right] 
}
=\frac{4\pi^2}{\sqrt{5}}
\prod_{n=1}^{\infty} \frac{(1-q^n)^5}{(1-q^{5n})}
=\frac{4\pi^2}{\sqrt{5}}
\frac{\eta(\tau)^5}{\eta(5\tau)}. 
\end{equation*}
\par
By setting 
\begin{equation*}
(X,Y)=
\left(
\theta^5
\left[
\begin{array}{c}
1 \\
\frac15
\end{array}
\right],  
\theta^5
\left[
\begin{array}{c}
1 \\
\frac35
\end{array}
\right] 
\right), 
\end{equation*}
we obtain 
\begin{equation*}
\frac{d X}{d\tau} Y
-
X \frac{d Y}{d\tau}
=
\frac{2\pi i}{(\sqrt{5})^3}
\frac{\eta^5(\tau)}{\eta(5\tau)}
\left(X^2-11XY-Y^2\right),
\end{equation*}
which implies that 
\begin{equation*}
\frac{d}{d\tau} W
=
\frac{d}{d\tau}
\left(
\frac{X}{Y}
\right)
=
\frac
{
\displaystyle \frac{dX}{d\tau}Y-X\frac{dY}{d\tau}
}
{
Y^2
}
=
\frac{2 \pi i}{(\sqrt{5})^3}
\frac{\eta^5(\tau)}{\eta(5\tau)}
\left(
W^2-11W-1
\right).
\end{equation*}
The theorem can be obtained by considering that 
$
\displaystyle
\frac{d}{d\tau}=\frac{dq}{d\tau}\frac{d}{dq}=2\pi i q \frac{d}{dq}. 
$
\end{proof}

\subsection{Derivation of a Riccati equation (2)}

\begin{proposition}
\label{prop:2nd-order-theta(1/5,1)-(3/5,1)}
{\it
For every $\tau\in\mathbb{H}^2,$ we have 
\begin{equation*}
\frac
{
\theta^{\prime \prime}
\left[
\begin{array}{c}
\frac35 \\
1
\end{array}
\right] 
}
{
\theta
\left[
\begin{array}{c}
\frac35 \\
1
\end{array}
\right] 
}
-
\frac
{
\theta^{\prime \prime}
\left[
\begin{array}{c}
\frac15 \\
1
\end{array}
\right] 
}
{
\theta
\left[
\begin{array}{c}
\frac15 \\
1
\end{array}
\right] 
}
=
\frac{2\zeta_5}{25}
\frac
{
\left\{
\theta^{\prime}
\left[
\begin{array}{c}
1 \\
1
\end{array}
\right] 
\right\}^2
}
{
\theta^6
\left[
\begin{array}{c}
\frac15 \\
1
\end{array}
\right] 
\theta^6
\left[
\begin{array}{c}
\frac35 \\
1
\end{array}
\right] 
}
\left\{
\theta^{10}
\left[
\begin{array}{c}
\frac15 \\
1
\end{array}
\right] 
+11
\theta^5
\left[
\begin{array}{c}
\frac15 \\
1
\end{array}
\right] 
\theta^5
\left[
\begin{array}{c}
\frac35 \\
1
\end{array}
\right] 
-
\theta^{10}
\left[
\begin{array}{c}
\frac35 \\
1
\end{array}
\right] 
\right\}. 
\end{equation*}
}
\end{proposition}

\begin{proof}
The proposition can be obtained by 
considering the derivative formulas (\ref{eqn:analogue-Jacobi-(1/5,1)}) and (\ref{eqn:analogue-Jacobi-(3/5,1)}) and by 
comparing the coefficients of the term $z^2$ in equation (\ref{eqn:level5-(1/5,1),(3/5,1)}). 
\end{proof}

\begin{theorem}
\label{thm:q-Riccati-diff-eq-modulus5-(2)}
{\it
For every $\tau\in\mathbb{H}^2,$ set
\begin{equation*}
W=
\frac{
\theta^5
\left[
\begin{array}{c}
\frac35 \\
1
\end{array}
\right](0, 5\tau)
}
{
\theta^5
\left[
\begin{array}{c}
\frac15 \\
1
\end{array}
\right](0, 5\tau)
}=
-q
\prod_{n=1}^{\infty} 
\frac{ (1-q^{5n-1} )^5 (1-q^{5n-4} )^5 }{ (1-q^{5n-2} )^5 (1-q^{5n-3} )^5 },
 \,\,q=\exp(2\pi i \tau).
\end{equation*}
$W$ then satisfies the following Riccati equation:
\begin{equation}
\label{eqn:Riccati-q-diff-eq-modulus5-(2)}
q
\frac{d}{dq} W
=
q\frac{(q^5;q^5)_{\infty}^5}{(q ;q)_{\infty}}
\left(
W^2-11W-1
\right). 
\end{equation}
}
\end{theorem}

\begin{proof}
The heat equation (\ref{eqn:heat}) and 
Proposition \ref{prop:2nd-order-theta(1/5,1)-(3/5,1)}
imply that 
\begin{align*}
&
4\pi i 
\frac
{
d
}
{
d\tau
}
\log 
\theta
\left[
\begin{array}{c}
\frac35 \\
1
\end{array}
\right] 
-
4\pi i 
\frac
{
d
}
{
d\tau
}
\log 
\theta
\left[
\begin{array}{c}
\frac15 \\
1
\end{array}
\right]   \\
=&
-
\frac{2\zeta_5}{25}
\frac
{
\left\{
\theta^{\prime}
\left[
\begin{array}{c}
1 \\
1
\end{array}
\right] 
\right\}^2
}
{
\theta
\left[
\begin{array}{c}
\frac35 \\
1
\end{array}
\right] 
\theta
\left[
\begin{array}{c}
\frac15 \\
1
\end{array}
\right] 
}
\frac
{
\left\{
\theta^{10}
\left[
\begin{array}{c}
\frac35 \\
1
\end{array}
\right] 
-11
\theta^5
\left[
\begin{array}{c}
\frac35 \\
1
\end{array}
\right] 
\theta^5
\left[
\begin{array}{c}
\frac15 \\
1
\end{array}
\right] 
-
\theta^{10}
\left[
\begin{array}{c}
\frac15 \\
1
\end{array}
\right] 
\right\}
}
{
\theta^5
\left[
\begin{array}{c}
\frac35 \\
1
\end{array}
\right] 
\theta^5
\left[
\begin{array}{c}
\frac15 \\
1
\end{array}
\right] 
},
\end{align*}
which shows that 
\begin{align*}
&
4\pi i 
\frac
{
d
}
{
d \tau
}
\log 
\theta^5
\left[
\begin{array}{c}
\frac35 \\
1
\end{array}
\right] 
-
4\pi i 
\frac
{
d
}
{
d\tau
}
\log 
\theta^5
\left[
\begin{array}{c}
\frac15 \\
1
\end{array}
\right]   \\
=&
-
\frac{2\zeta_5}{5}
\frac
{
\left\{
\theta^{\prime}
\left[
\begin{array}{c}
1 \\
1
\end{array}
\right] 
\right\}^2
}
{
\theta
\left[
\begin{array}{c}
\frac35 \\
1
\end{array}
\right] 
\theta
\left[
\begin{array}{c}
\frac15 \\
1
\end{array}
\right] 
}
\frac
{
\left\{
\theta^{10}
\left[
\begin{array}{c}
\frac35 \\
1
\end{array}
\right] 
-
11
\theta^5
\left[
\begin{array}{c}
\frac35 \\
1
\end{array}
\right] 
\theta^5
\left[
\begin{array}{c}
\frac15 \\
1
\end{array}
\right] 
-
\theta^{10}
\left[
\begin{array}{c}
\frac15 \\
1
\end{array}
\right] 
\right\}
}
{
\theta^5
\left[
\begin{array}{c}
\frac35 \\
1
\end{array}
\right] 
\theta^5
\left[
\begin{array}{c}
\frac15 \\
1
\end{array}
\right] 
}.
\end{align*}
\par
Jacobi's triple product identity (\ref{eqn:Jacobi-triple}) yields 
\begin{equation*}
\frac
{
\left\{
\theta^{\prime}
\left[
\begin{array}{c}
1 \\
1
\end{array}
\right] 
\right\}^2
}
{
\theta
\left[
\begin{array}{c}
\frac35 \\
1
\end{array}
\right] 
\theta
\left[
\begin{array}{c}
\frac15 \\
1
\end{array}
\right] 
}
=\frac{4\pi^2}{ \zeta_5}
y\prod_{n=1}^{\infty} \frac{(1-y^{5n})^5}{(1-y^{n})} 
=\frac{4\pi^2}{ \zeta_5}\frac{\eta^5(\tau)}{\eta(\tau/5)}, \,\,y=\exp(2 \pi i \tau/5). 
\end{equation*}
\par
By setting 
\begin{equation*}
(X,Y)=
\left(
\theta^5
\left[
\begin{array}{c}
\frac15 \\
1
\end{array}
\right](0,\tau),   
\theta^5
\left[
\begin{array}{c}
\frac35 \\
1
\end{array}
\right](0,\tau) 
\right), 
\end{equation*}
we obtain 
\begin{equation*}
y \frac{d}{d y} W
=
y
\frac{d}{d y}
\left(
\frac{Y}{X}
\right)
=
y \frac{(y^5;y^5)^5_{\infty}}{(y;y)_{\infty}} 
\left(
W^2-11W-1
\right).
\end{equation*}
The theorem can be obtained by changing $\tau\longrightarrow 5\tau.$ 
\end{proof}




\end{document}